\def\NZQ{\mathbb}               
\def\ZZ{{\NZQ Z}}
\def\RR{{\NZQ R}}
\def\frk{\mathfrak}               
\def\Phi{{\frk N}}
\def\ab{{\mathbf a}}
\def\eb{{\mathbf e}}
\def\tb{{\mathbf t}}
\def\wb{{\mathbf w}}
\def\xb{{\mathbf x}}
\def\yb{{\mathbf y}}
\def\opn#1#2{\def#1{\operatorname{#2}}} 
\opn\gr{gr}
\def\Ac{{\mathcal A}}
\def\Bc{{\mathcal B}}
\def\Hc{{\mathcal H}}
\def\Sc{{\mathcal S}}
\def\Gc{{\mathcal G}}
\def\Fc{{\mathcal F}}
\def\Oc{{\mathcal O}}
\def\Pc{{\mathcal P}}
\def\Qc{{\mathcal Q}}
\def\Vol{{\textnormal{Vol}}}
\newtheorem{Theorem}{Theorem}[section]
\newtheorem{Lemma}[Theorem]{Lemma}
\newtheorem{Proposition}[Theorem]{Proposition}
\theoremstyle{definition}
\newtheorem{Remark}[Theorem]{Remark}
\newtheorem{Example}[Theorem]{Example}
\newtheorem{Examples}[Theorem]{Examples}
\let\epsilon\varepsilon
\let\phi=\varphi
\let\kappa=\varkappa
\opn\dis{dis}
\opn\height{height}
\opn\dist{dist}
\def\pnt{{\raise0.5mm\hbox{\large\bf.}}}
\opn\Lex{Lex}
\opn\conv{conv}
\begin{document}

\title{Reflexive polytopes arising from bipartite graphs with $\gamma$-positivity associated to interior polynomials}
\author{Hidefumi Ohsugi and Akiyoshi Tsuchiya}
\address{Hidefumi Ohsugi,
	Department of Mathematical Sciences,
	School of Science and Technology,
	Kwansei Gakuin University,
	Sanda, Hyogo 669-1337, Japan} 
\email{ohsugi@kwansei.ac.jp}

\address{Akiyoshi Tsuchiya,
Graduate school of Mathematical Sciences,
University of Tokyo,
Komaba, Meguro-ku, Tokyo 153-8914, Japan} 
\email{akiyoshi@ms.u-tokyo.ac.jp}

\subjclass[2010]{05A15, 05C31, 13P10, 52B12, 52B20}
\keywords{reflexive polytope, unimodular covering,  regular unimodular triangulation, $\gamma$-positive, real-rooted, interior polynomial, 
matching generating polynomial, $P$-Eulerian polynomial}

\begin{abstract}
In this paper, we introduce polytopes $\Bc_G$ arising from root systems $B_n$ and
 finite graphs $G$, and study their combinatorial and algebraic properties.
In particular, it is shown that $\Bc_G$ is reflexive if and only if $G$ is bipartite.
Moreover, in the case, $\Bc_G$ has a regular unimodular triangulation.
This implies that the $h^*$-polynomial of $\Bc_G$ is palindromic and unimodal when $G$ is bipartite.
Furthermore, we discuss stronger properties, namely the $\gamma$-positivity and the real-rootedness of the $h^*$-polynomials.
In fact, if $G$ is bipartite, then the 
$h^*$-polynomial of $\Bc_G$ is $\gamma$-positive and its $\gamma$-polynomial is given by an interior polynomial (a version of the Tutte polynomial for a hypergraph). 
The $h^*$-polynomial is real-rooted if and only if the corresponding interior polynomial is real-rooted.
From a counterexample to Neggers--Stanley conjecture, we construct a bipartite graph $G$ whose $h^*$-polynomial is not real-rooted but $\gamma$-positive, and
coincides with the $h$-polynomial of a flag triangulation of a sphere.
\end{abstract}

\maketitle

\section*{Introduction}
A {\em lattice polytope} $\Pc  \subset \RR^d$ is a convex polytope all of whose vertices have integer coordinates.
Ardila et al.~ \cite{Ardila} constructed a unimodular triangulation of the convex hull of the roots of the classical root lattices
of type $A_n$, $B_n$, $C_n$ and $D_n$, and
gave an alternative proof for the known growth series of these root lattices
by using the triangulation.

In \cite{HJMsymmetric}, lattice polytopes arising from the root system of type $A_n$ and finite graphs were introduced.
Let $G$ be a finite simple undirected graph on the vertex set $[d]=\{1, \ldots, d\}$ with the edge set $E(G)$.
We denote $\Ac_G \subset \RR^d$ the convex hull of the set 
$$
A(G)
=\{{\bf 0}\}
\cup
\{\pm ({\bf e}_i - {\bf e}_j)  : \{i,j\} \in E(G)\},
$$
where $\eb_i$ is the $i$-th unit coordinate vector in $\RR^d$ and ${\bf 0}$ is the origin of $\RR^d$.
Then $\Ac_G$ is centrally symmetric, i.e., for any facet $\Fc$ of $\Ac_G$, $-\Fc$ is also a facet of $\Ac_G$, and ${\bf 0}$ is the unique (relative) interior lattice point of $\Ac_G$.
Note that, if $G$ is a complete graph, then $\Ac_G$ coincides with 
the convex hull of the roots of the root lattices
of type $A_n$ studied in \cite{Ardila}.
This polytope $\Ac_G$ is called the {\em symmetric edge polytope} of $G$
and several combinatorial properties of $\Ac_G$ are well-studied (\cite{HKMinterlacing, HJMsymmetric, MHNOH}).

In this paper,  we introduce lattice polytopes arising from the root system of type $B_n$ and finite graphs, and study their algebraic and combinatorial properties.
Let $\Bc_G \subset \RR^d$ denote the convex hull of the set 
$$
B(G)
=\{{\bf 0} , \pm {\bf e}_1, \ldots, \pm {\bf e}_d\}
\cup
\{\pm {\bf e}_i \pm {\bf e}_j  : \{i,j\} \in E(G)\}.
$$
Then $\dim \Bc_G =d$, $\Bc_G$ is centrally symmetric and ${\bf 0}$  is the unique interior lattice point of $\Bc_G$.
Note that, if $G$ is a complete graph, then $\Bc_G$ coincides with 
the convex hull of the roots of the root lattices
of type $B_n$ studied in \cite{Ardila}.
Several classes of lattice polytopes arising from graphs have been studied from viewpoints of combinatorics, graph theory, geometric and commutative algebra.
In particular, {\em edge polytopes} give interesting examples in commutative algebra (\cite{HMT2,OHnormal, OHnoregunitri, Koszulbipartite, SVV}).
Note that edge polytopes of bipartite graphs are called {\em root polytopes} and play important roles in the study of generalized permutohedra (\cite{Postnikov})
and interior polynomials (\cite{KalPos}).

There is a strong relation between $\Bc_G$ and edge polytopes.
In fact, one of the key properties of $\Bc_G$ is that $\Bc_G$ is divided into $2^d$ edge polytopes of certain non-simple graphs $\widetilde{G}$
(Proposition~\ref{keylemma}).
This fact helps us to find and show interesting properties of $\Bc_G$.
In Section~\ref{sec:IDP}, by using this fact, we will classify graphs $G$ such that $\Bc_G$ has a unimodular covering (Theorem \ref{OCCforBG}).
We remark that $\Ac_G$ always has a unimodular covering.

On the other hand, the fact that $\Bc_G$ has a unique interior lattice point ${\bf 0}$ leads us to consider when $\Bc_G$ is reflexive.
A lattice polytope $\Pc \subset \RR^d$ of dimension $d$ is called \textit{reflexive} if the origin of $\RR^d$ is a unique lattice point belonging to the interior of $\Pc$ and its dual polytope
\[\Pc^\vee:=\{\yb \in \RR^d  :  \langle \xb,\yb \rangle \leq 1 \ \text{for all}\  \xb \in \Pc \}\]
is also a lattice polytope, where $\langle \xb,\yb \rangle$ is the usual inner product of $\RR^d$.
It is known that reflexive polytopes correspond to Gorenstein toric Fano varieties, and they are related to
mirror symmetry (see, e.g., \cite{mirror,Cox}).
In each dimension there exist only finitely many reflexive polytopes 
up to unimodular equivalence (\cite{Lag})
and all of them are known up to dimension $4$ (\cite{Kre}).
Here two lattice polytopes $\Pc \subset \RR^d$ and $\Pc' \subset \RR^{d'}$ are said to be \textit{unimodularly equivalent} if there exists an affine map from the affine span ${\rm aff} (\Pc)$ of $\Pc$ to the affine span ${\rm aff} (\Pc')$ of $\Pc'$ that maps $\ZZ^d \cap {\rm aff}(\Pc)$ bijectively onto $\ZZ^{d'} \cap {\rm aff} (\Pc')$ and that maps $\Pc$ to $\Pc'$.
Every lattice polytope is unimodularly equivalent to a full-dimensional one.
In Section \ref{sec:ref}, we will classify graphs $G$ such that 
 $\Bc_G$ is a reflexive polytope.
In fact, we will show the following.
\begin{Theorem}
	\label{reflexiveBG}
	Let $G$ be a finite graph. 
	Then the following conditions are equivalent{\rm :}
	\begin{itemize}
		\item[(i)]
		$\Bc_G$ is reflexive and has a regular unimodular triangulation{\rm ;}
		\item[(ii)]
		$\Bc_G$ is reflexive{\rm ;}
		\item[(iii)]
		$G$ is a bipartite graph.
	\end{itemize}
\end{Theorem}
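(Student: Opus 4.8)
The plan is to pass to the dual polytope, which can be read off directly from $B(G)$: since $B(G)$ consists of $\mathbf{0}$, the vectors $\pm\eb_i$, and the vectors $\pm\eb_i\pm\eb_j$ with $\{i,j\}\in E(G)$, the inequalities $\langle\xb,\yb\rangle\leq1$ give
\[
  \Bc_G^\vee=\bigl\{\yb\in\RR^d\ :\ |y_i|\leq1\ (1\leq i\leq d),\ \ |y_i|+|y_j|\leq1\ (\{i,j\}\in E(G))\bigr\}.
\]
This set is invariant under every coordinate sign change $y_i\mapsto-y_i$, and $\Bc_G^\vee\cap\RR^d_{\geq0}$ is the fractional stable set polytope
\[
  Q(G)=\bigl\{\zb\in\RR^d\ :\ 0\leq z_i\leq1\ (1\leq i\leq d),\ \ z_i+z_j\leq1\ (\{i,j\}\in E(G))\bigr\}.
\]
Since the excerpt already records that $\Bc_G$ is full-dimensional with $\mathbf{0}$ as its unique interior lattice point, and since (i) $\Rightarrow$ (ii) is trivial, the theorem comes down to two assertions: (a) $\Bc_G^\vee$ is a lattice polytope if and only if $G$ is bipartite; and (b) if $G$ is bipartite, then $\Bc_G$ has a regular unimodular triangulation.

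For (a): if $G$ is bipartite, take any vertex $\vb$ of $\Bc_G^\vee$ and choose a coordinate sign change $\rho$ carrying $\vb$ into $\RR^d_{\geq0}$; as $\rho$ is a unimodular automorphism of $\RR^d$ preserving $\Bc_G^\vee$, the point $\rho(\vb)$ is again a vertex of $\Bc_G^\vee$, lies in $\Bc_G^\vee\cap\RR^d_{\geq0}=Q(G)$, and is therefore a vertex of $Q(G)$. Now $Q(G)$ is a lattice polytope: for bipartite $G$ the edge--vertex incidence matrix is totally unimodular, it stays totally unimodular after adjoining the rows $\pm\eb_i$, and the right-hand sides are integral (this is essentially K\"onig's theorem). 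Hence $\rho(\vb)$, and so $\vb$, is a lattice point. Conversely, if $G$ is not bipartite, fix an odd cycle of $G$ on vertices $v_1,\dots,v_{2k+1}$ with cyclic edges $\{v_s,v_{s+1}\}$; each inequality $y_{v_s}+y_{v_{s+1}}\leq1$ is valid on $\Bc_G^\vee$ (because $\eb_{v_s}+\eb_{v_{s+1}}\in B(G)$), so
\[
  F=\bigl\{\yb\in\Bc_G^\vee\ :\ y_{v_s}+y_{v_{s+1}}=1\ \text{ for }s=1,\dots,2k+1\bigr\}
\]
is a face of $\Bc_G^\vee$. It is nonempty, as it contains the point with $v_s$-coordinate $1/2$ for every $s$ and all other coordinates $0$; but the linear system $y_{v_s}+y_{v_{s+1}}=1$ around an odd cycle has the single solution $y_{v_1}=\dots=y_{v_{2k+1}}=1/2$, so every vertex of $F$ --- and a nonempty face has one --- is a vertex of $\Bc_G^\vee$ with a non-integral coordinate. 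Thus $\Bc_G^\vee$ is not a lattice polytope, $\Bc_G$ is not reflexive, and this settles (ii) $\Rightarrow$ (iii) as well as the failure of (i) for non-bipartite $G$.

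For (b) I would build the triangulation from Proposition~\ref{keylemma}, which exhibits $\Bc_G$ as the union of $2^d$ edge polytopes of graphs $\widetilde G$, one in each closed coordinate orthant; one checks that this decomposition is the regular subdivision of $\Bc_G$ induced by the convex function $\sum_i|x_i|$. When $G$ is bipartite, all odd cycles of each $\widetilde G$ pass through a common vertex, and by known results on edge polytopes such edge polytopes carry regular unimodular triangulations; refining the orthant subdivision cell by cell and taking the common refinement yields a unimodular triangulation of $\Bc_G$. The main obstacle is the gluing: one must choose the triangulations of the $2^d$ cells so that they agree along the coordinate hyperplanes, and then check that the refinement is still regular. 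I expect the cleanest way to achieve both simultaneously is to produce a single monomial order refining the orthant weight for which the toric ideal of $B(G)$ has a squarefree initial ideal --- equivalently, a global generic lifting of $B(G)$ whose restriction to each orthant cell induces the chosen triangulation there --- making regularity and unimodularity of the whole triangulation automatic. Everything in the argument apart from this gluing step is routine.
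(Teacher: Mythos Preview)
Your argument for (ii) $\Leftrightarrow$ (iii) via the dual polytope is correct and follows a genuinely different route from the paper. You identify $\Bc_G^\vee\cap\RR^d_{\ge0}$ with the fractional stable set polytope $Q(G)$ and use the classical fact that $Q(G)$ is integral exactly when $G$ is bipartite; the coordinate sign-change symmetry then carries this to all of $\Bc_G^\vee$. The paper instead works on the primal side: for non-bipartite $G$ it writes down an explicit facet-supporting hyperplane $\{\wb\cdot\xb=2\}$ (with $\wb$ assembled from the connected components, putting $1$'s on all vertices of a non-bipartite component and $2$'s on one colour class of a bipartite component), checks directly that this is a facet, and observes that $\tfrac12\wb\notin\ZZ^d$. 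Your approach is more conceptual and handles both directions of the equivalence at once via a well-known combinatorial characterisation; the paper's is more hands-on but dovetails with the Gr\"obner machinery it needs anyway for (i).

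For (iii) $\Rightarrow$ (i) you have located the crux but not closed it. The orthant decomposition is indeed regular, and each cell---being unimodularly equivalent to the edge polytope $P_{\widehat G}$ of a bipartite graph---admits only unimodular triangulations; but producing a \emph{single} regular unimodular triangulation of $\Bc_G$ requires compatible choices across all $2^d$ cells together with one global convex lifting, and this is not automatic from the pieces. The paper carries this out exactly along the line you suggest in your last sentence: it proves a transfer lemma showing that if $\Gc$ is a Gr\"obner basis of $I_{P_{\widetilde G}}$ with respect to a reverse lexicographic order in which the variable for $\mathbf 0$ is smallest, then the union of the $2^d$ sign-translates $\varphi_\varepsilon(\Gc)$ together with explicit ``cross-orthant'' quadrics is a Gr\"obner basis of $I_{\Bc_G}$, and its initial ideal inherits squarefreeness. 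Reflexivity and the regular unimodular triangulation then follow from the Hibi--Matsuda--Ohsugi--Shibata criterion. That construction and its verification are the substance of the implication; without them your proof of (i) remains a plan rather than a proof.
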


We remark that not all unimodular equivalence classes of reflexive 
polytopes are represented by a polytope of the form $\Bc_G$. Indeed, 
$\Bc_G$ is always centrally symmetric but not all reflexive polytopes 
are centrally symmetric. On the other hand, $\Ac_G$ is always (unimodularly equivalent to) a centrally symmetric reflexive polytope. However, the class of reflexive polytopes $\Ac_G$ is different from that of $\Bc_G$. For example, if $G$ is a complete bipartite graph, then $\Bc_G$ is not unimodularly equivalent to $\Ac_{G'}$ for any graph $G'$.

Next, by characterizing when the toric ideal of $\Bc_G$ has a Gr\"{o}bner basis consisting of squarefree quadratic binomials for a bipartite graph $G$, we can classify graphs $G$ such that 
$\Bc_G$ is a reflexive polytope with a flag regular unimodular triangulation.
In fact,

\begin{Theorem}
	\label{thm:flag}
	Let $G$ be a bipartite graph.
	Then the following conditions are equivalent{\rm :}
	\begin{itemize}
		\item[(i)]
		The reflexive polytope $\Bc_G$ has a flag regular unimodular triangulation{\rm ;}
		
		\item[(ii)] 		Any cycle of $G$ of length $\ge 6$ has a chord
{\rm (}``chordal bipartite graph''{\rm )}.	
	\end{itemize}
\end{Theorem}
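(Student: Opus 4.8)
The plan is to translate the statement into a property of the toric ideal $I_{\Bc_G}$ and then to exploit the decomposition of $\Bc_G$ into edge polytopes from Proposition~\ref{keylemma}. Regard $I_{\Bc_G}$ as living in the polynomial ring with one variable $x_p$ for each lattice point $p$ of $\Bc_G$; since a unimodular triangulation has no lattice points beyond its vertices, Theorem~\ref{reflexiveBG} tells us these lattice points are exactly the elements of $B(G)$. It is standard that a lattice polytope has a flag regular unimodular triangulation if and only if its toric ideal admits a Gr\"obner basis consisting of squarefree quadratic binomials: a squarefree initial ideal is the Stanley--Reisner ideal of a regular unimodular triangulation, and it is generated in degree $2$ exactly when that triangulation is flag. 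As Theorem~\ref{reflexiveBG} already furnishes a regular unimodular triangulation of $\Bc_G$ for bipartite $G$, the task is to show that $I_{\Bc_G}$ admits a squarefree quadratic Gr\"obner basis if and only if every cycle of $G$ of length $\ge 6$ has a chord.

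For the direction (i)$\Rightarrow$(ii) I would argue the contrapositive. Suppose $C=(v_1,v_2,\dots,v_{2k})$ is a chordless cycle of $G$ with $2k\ge 6$, let $M,M'$ be its two perfect matchings, and put $f_C=\prod_{\{v,w\}\in M}x_{\eb_v+\eb_w}-\prod_{\{v,w\}\in M'}x_{\eb_v+\eb_w}$. Both monomials have image $\sum_{i=1}^{2k}\eb_{v_i}$ and degree $k$, so $f_C\in I_{\Bc_G}$. The point is that its fibre consists of just these two monomials: if $\sum_i\eb_{v_i}=p_1+\cdots+p_k$ with $p_\ell\in B(G)$, comparing coordinate sums (the $\ell^1$-norm of the target equals $2k$ and each $p_\ell$ contributes at most $2$) forces every $p_\ell$ to have the form $\eb_a+\eb_b$ with $a,b\in V(C)$ and the supports of the $p_\ell$ pairwise disjoint; since $C$ is chordless such an edge lies on $C$, so $\{p_1,\dots,p_k\}$ is a perfect matching of $C$, and $C$ has exactly two. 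Hence $f_C$ is an indispensable binomial of degree $k\ge 3$, so $I_{\Bc_G}$ is not generated by quadratic binomials, in particular has no quadratic Gr\"obner basis, and $\Bc_G$ has no flag regular unimodular triangulation.

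For the direction (ii)$\Rightarrow$(i), let $G$ be chordal bipartite. By Proposition~\ref{keylemma} the polytope $\Bc_G$ is a union of $2^{d}$ lattice polytopes, one per orthant of $\RR^{d}$, each of which is unimodularly equivalent --- via the sign change identifying its orthant with the nonnegative one, followed by a unimodular projection --- to the edge polytope of the associated non-simple graph $\widetilde G$, which is $G$ together with a new universal vertex $s$ carrying a loop. I would then check that chordal-bipartiteness passes from $G$ to $\widetilde G$ in the form the edge-ring theory needs: any cycle of $\widetilde G$ of length $\ge 4$ through $s$ has a chord (because $s$ is universal), a cycle avoiding $s$ is a cycle of $G$ with the same chords, and two vertex-disjoint odd cycles of $\widetilde G$ would both have to use $s$, which is impossible; so $\widetilde G$ has no chordless cycle of length $\ge 5$ and no bridgeless pair of disjoint odd cycles. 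By the Ohsugi--Hibi characterization of graphs whose edge ring has a squarefree quadratic Gr\"obner basis (see \cite{Koszulbipartite} for the bipartite case), the edge ring of $\widetilde G$ has one. Transporting it by the sign symmetry to all $2^{d}$ pieces, one glues the resulting flag regular unimodular triangulations along the coordinate hyperplanes into a regular unimodular triangulation $\Delta$ of $\Bc_G$; since $\Bc_G$ is reflexive, $\Delta$ is the cone from the origin over its restriction to $\partial\Bc_G$, so flagness of $\Delta$ follows from flagness of the pieces once one also controls faces lying on some $\{x_i=0\}$, where $\Bc_G\cap\{x_i=0\}=\Bc_{G-i}$ is again of the same type with $G-i$ chordal bipartite, permitting an induction on $d$.

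The main obstacle I anticipate is precisely this gluing: exhibiting a single term order on $I_{\Bc_G}$ that simultaneously realizes the quadratic Gr\"obner basis of all $2^{d}$ edge rings and keeps the assembled triangulation flag --- equivalently, ruling out a clique of vertices of $\Delta$ spread over several orthants that spans no face. This is kept under control by the fact that an edge of $\Delta$ joining vertices in two different orthants must lie in some coordinate hyperplane $\{x_i=0\}$, which returns the question to $\Bc_{G-i}$; carrying that bookkeeping through cleanly (presumably reusing the machinery already developed for Theorem~\ref{OCCforBG}) and confirming that ``chordal bipartite'' is exactly the hypothesis the edge-ring criterion requires for $\widetilde G$ are the steps I expect to be most delicate.
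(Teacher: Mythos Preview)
Your direction (i)$\Rightarrow$(ii) is correct and in fact more self-contained than the paper's: the paper observes that $P_G$ is a face of $\Bc_G$, so $K[P_G]$ is a combinatorial pure subring of $K[\Bc_G]$, and then quotes \cite{Koszulbipartite} to say that $I_{P_G}$ is not quadratically generated when $G$ has a long chordless cycle. Your indispensable-binomial argument reproves that obstruction directly inside $I_{\Bc_G}$ and is perfectly sound (your ``$\ell^1$-norm'' is really the linear functional $\sum x_i$, but for the nonnegative target the two agree, and the rest goes through).

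For (ii)$\Rightarrow$(i) there is a genuine gap, and it is exactly the one you flag. The paper does not attempt a geometric gluing of the $2^d$ orthant triangulations followed by an induction over coordinate hyperplanes; instead it proves a single algebraic lemma (Lemma~\ref{GBlemma}) which manufactures a reverse lexicographic order $<_R$ on the whole ring $R_G$ from a reverse lexicographic order $<_S$ on $S_G$, and shows that the union of the sign-translates of a Gr\"obner basis of $I_{P_{\widetilde G}}$ together with a short explicit list of quadratic ``interface'' binomials is a Gr\"obner basis of $I_{\Bc_G}$. Squarefreeness and quadraticity are then inherited for free. This replaces your inductive bookkeeping with one global computation; your proposed reduction ``an edge between different orthants lies in some $\{x_i=0\}$'' is not literally true (e.g.\ $\eb_1+\eb_2$ and $-\eb_1-\eb_2$ in $\RR^2$), and even when the combinatorial gluing works, regularity of the assembled triangulation does not follow from regularity of the pieces without producing a global weight, which is precisely what Lemma~\ref{GBlemma} supplies.

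A second, smaller issue: you work with $\widetilde G$, which is not bipartite (it contains triangles through the apex and a loop), and then invoke an ``Ohsugi--Hibi characterization'' of graphs whose edge ring has a squarefree quadratic Gr\"obner basis. The result in \cite{Koszulbipartite} is stated for bipartite graphs, and no equally clean criterion is available for $\widetilde G$. The paper avoids this by passing to $\widehat G$ (which is bipartite, and chordal bipartite whenever $G$ is), and then re-running the incidence-matrix argument of \cite{Koszulbipartite} to check that the needed order $<_S$ can be chosen with $z$ smallest so that Lemma~\ref{GBlemma} and Lemma~\ref{HMOS} both apply. Switching from $\widetilde G$ to $\widehat G$ and using Lemma~\ref{GBlemma} in place of the geometric gluing would close your argument.
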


Now, we turn to the discussion of the $h^*$-polynomial $h^*(\Bc_G, x)$ of $\Bc_G$.
Thanks to the key property (Proposition \ref{keylemma}), we can compute the  $h^*$-polynomial of $\Bc_G$ in terms of that of edge polytopes of some graphs.
On the other hand, since it is known that the $h^*$-polynomial of a reflexive polytope with a regular unimodular triangulation is palindromic  and unimodal (\cite{BR}), Theorem \ref{reflexiveBG} implies that the $h^*$-polynomial of $\Bc_G$ is palindromic and  unimodal if $G$ is bipartite.
In Section \ref{sec:gamma}, we will show a stronger result, namely for any bipartite graph $G$, the $h^*$-polynomial $h^*(\Bc_G, x)$ is $\gamma$-positive.
The theory of interior polynomials (a version of the Tutte polynomials for hypergraphs) introduced by K\'{a}lm\'{a}n \cite{interior}
and
the theory of generalized permutohedra \cite{OhTransversal, Postnikov}
play important roles.

\begin{Theorem}
	\label{hpolymain}
	Let $G$ be a bipartite graph on $[d]$.
	Then $h^*$-polynomial of the reflexive polytope $\Bc_G$ is
	$$
	h^*(\Bc_G, x)
	=  (x+1)^d I_{\widehat{G}} \left(  \frac{4x}{(x+1)^2} \right),
	$$
	where 
$\widehat{G}$ is a connected bipartite graph 
defined in $ (\ref{widehat})$ later
and
$I_{\widehat{G}}(x)$ is the interior polynomial of $\widehat{G}$.
	In particular, $h^*(\Bc_G, x)$ is $\gamma$-positive.
	Moreover, $h^*(\Bc_G,x)$ is real-rooted if and only if $I_{\widehat{G}}(x)$ is real-rooted.
\end{Theorem}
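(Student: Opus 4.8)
The plan is to reduce $h^*(\Bc_G,x)$ to interior polynomials by combining the decomposition of Proposition~\ref{keylemma} with the theorem of K\'alm\'an and Postnikov that the $h^*$-polynomial of the root polytope of a connected bipartite graph is its interior polynomial \cite{interior,KalPos}, and then to deduce the two consequences ($\gamma$-positivity, real-rootedness) by short self-contained arguments.

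First I would make the decomposition explicit. Write $\Bc_G^+:=\Bc_G\cap\RR^d_{\ge 0}$; by Proposition~\ref{keylemma} the polytope $\Bc_G$ is the union of the $2^d$ pieces $\s\cdot\Bc_G^+$, $\s\in\{\pm1\}^d$, glued along faces lying in coordinate hyperplanes, each piece being unimodularly equivalent to $\Bc_G^+$. Since $G$ is bipartite with colour classes $X,Y$, the affine map sending $\eb_i+\eb_j\mapsto\eb_i-\eb_j$ on the edges, $\eb_i\mapsto\eb_i-\eb_b$ for $i\in X$, $\eb_i\mapsto\eb_a-\eb_i$ for $i\in Y$, and $\mathbf 0\mapsto\eb_a-\eb_b$ (with $a,b$ new vertices) is unimodular and carries $\Bc_G^+$ onto the root polytope $\Qc_{\widehat G}$, where $\widehat G$ is $G$ together with a vertex joined to all of $Y$, a vertex joined to all of $X$, and an edge between these two new vertices; this is the connected bipartite graph $\widehat G$ appearing in the statement. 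Under this map the wall $\{x_i=0\}\cap\Bc_G^+$ goes to $\Qc_{\widehat G}\cap\{x_i=0\}=\Qc_{\widehat G-i}$, and since $\widehat G-U$ is connected and bipartite for every $U\subseteq V(G)$, \cite{KalPos} gives $h^*(\Qc_{\widehat G-U},x)=I_{\widehat G-U}(x)$.

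Next I would compute $h^*(\Bc_G,x)$. Because $\Bc_G$ is invariant under all coordinate sign changes, every lattice point of $m\Bc_G$ is a sign flip of a unique lattice point of $m\Bc_G^+$, so $|m\Bc_G\cap\ZZ^d|=\sum_{v\in m\Bc_G^+\cap\ZZ^d}2^{|\mathrm{supp}(v)|}$. Transporting this to $\Qc_{\widehat G}$ (where $|\mathrm{supp}(v)|$ becomes the number of nonzero coordinates, indexed by $V(G)$, of the image point), writing $2^{|\mathrm{supp}|}=\prod_{i\in V(G)}\big(1+[\,w_i\neq0\,]\big)$, expanding, and applying inclusion--exclusion over the faces $\Qc_{\widehat G-i}$ together with $h^*(\Qc_{\widehat G-U},x)=I_{\widehat G-U}(x)$, one obtains
\[
h^*(\Bc_G,x)=\sum_{U\subseteq V(G)}(-1)^{|U|}\,2^{\,d-|U|}\,(1-x)^{|U|}\,I_{\widehat G-U}(x).
\]
The crux of the proof is then the polynomial identity
\[
\sum_{U\subseteq V(G)}(-1)^{|U|}\,2^{\,d-|U|}\,(1-x)^{|U|}\,I_{\widehat G-U}(x)=(1+x)^d\,I_{\widehat G}\!\Big(\frac{4x}{(1+x)^2}\Big),
\]
which I expect to be the main obstacle. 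I would try to establish it by induction on $|V(G)|$ (base case $\widehat{\varnothing}=K_2$, $I_{K_2}=1$), using a deletion relation for the interior polynomials of the family $\{\widehat G-U\}$ along a vertex of $V(G)$ --- equivalently, exploiting that the $\Qc_{\widehat G-U}$ are faces of $\Qc_{\widehat G}$ and Ehrhart reciprocity --- or, failing a clean recursion, by a direct generating-function evaluation of the weighted Ehrhart series $\sum_m x^m\sum_{w\in m\Qc_{\widehat G}}2^{\#\{i\in V(G):\,w_i\neq0\}}$.

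Granting the formula, $\gamma$-positivity is immediate: expanding the right-hand side,
\[
h^*(\Bc_G,x)=\sum_{k\ge 0}4^k\big([x^k]I_{\widehat G}\big)\,x^k(1+x)^{d-2k},
\]
and since interior polynomials have nonnegative coefficients and constant term $1$ \cite{interior}, this is the $\gamma$-expansion of the palindromic polynomial $h^*(\Bc_G,x)$ with $\gamma_k=4^k[x^k]I_{\widehat G}\ge 0$; its $\gamma$-polynomial is $I_{\widehat G}(4x)$. For the last statement I would prove the elementary lemma: if $\gamma(y)=\sum_{k=0}^{m}\gamma_k y^k$ has $\gamma_k\ge0$, $\gamma_0>0$, and $d\ge 2m$, then $h(x):=\sum_{k}\gamma_k x^k(1+x)^{d-2k}$ is real-rooted if and only if $\gamma(y)$ is real-rooted. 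Indeed $h(x)=\gamma_m(1+x)^{d-2m}\prod_{j=1}^{m}\big(x-y_j(1+x)^2\big)$, where $y_1,\dots,y_m$ are the roots of $\gamma$; if each $y_j$ is real and negative then the quadratic $x-y_j(1+x)^2$ has discriminant $1-4y_j>0$ and two negative real roots, whence $h$ is real-rooted. Conversely $h$ has nonnegative coefficients, so real-rootedness forces every root of $h$ to be $\le 0$; analysing the monotone branches of $x\mapsto x/(1+x)^2$ on $(-\infty,-1)$ and on $(-1,0)$ shows this is possible only when each $y_j$ is real and negative, i.e.\ $\gamma$ is real-rooted. Applying the lemma with $\gamma(y)=I_{\widehat G}(4y)$ (so that $h=h^*(\Bc_G,\cdot)$) and noting that $y\mapsto 4y$ preserves real-rootedness gives the equivalence between real-rootedness of $h^*(\Bc_G,x)$ and of $I_{\widehat G}(x)$.
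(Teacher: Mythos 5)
The reduction you set up is exactly the paper's: the orthant decomposition of Proposition~\ref{keylemma}, the identification of $\Bc_G\cap\RR^d_{\ge 0}$ with the root polytope of $\widehat G$, K\'alm\'an--Postnikov's $h^*(P_{\widehat H},x)=I_{\widehat H}(x)$, and inclusion--exclusion giving
$h^*(\Bc_G,x)=\sum_{U\subseteq V(G)}(-1)^{|U|}2^{\,d-|U|}(1-x)^{|U|}I_{\widehat{G-U}}(x)$,
which is precisely Proposition~\ref{hpolyformula} of the paper. Your treatment of the two consequences is also fine: $\gamma$-positivity is immediate from nonnegativity of the coefficients of $I_{\widehat G}$, and your lemma on real-rootedness of $\sum_k\gamma_k x^k(1+x)^{d-2k}$ versus its $\gamma$-polynomial is a correct (and standard) argument, which the paper merely asserts.

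The genuine gap is the identity you yourself flag as ``the main obstacle'':
$\sum_{U}(-1)^{|U|}2^{\,d-|U|}(1-x)^{|U|}I_{\widehat{G-U}}(x)=(1+x)^d I_{\widehat G}\bigl(4x/(1+x)^2\bigr)$.
You do not prove it; you only list possible strategies (induction via a deletion relation for interior polynomials, Ehrhart reciprocity, a weighted Ehrhart series), and none of these is carried out or obviously workable --- interior polynomials do not satisfy a simple deletion recursion, which is why K\'alm\'an's activity definition is delicate. The paper closes exactly this gap with Proposition~\ref{kmatching_interior}: for bipartite $H$, $I_{\widehat H}(x)=\sum_{k\ge 0}|M(H,k)|\,x^k$, where $M(H,k)$ is the set of $2k$-element vertex sets covered by some $k$-matching of $H$; its proof requires the transversal-matroid and generalized-permutohedron machinery of Oh and Postnikov, not just hypertree manipulations. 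Once this matching interpretation is available, the identity follows from the elementary count that a fixed element of $M(G,k)$ lies in exactly $\binom{d-2k}{j-2k}$ induced subgraphs on $j$ vertices, so the double sum collapses by the binomial theorem to $\sum_k|M(G,k)|\,4^kx^k(x+1)^{d-2k}$. Without this (or an equivalent) combinatorial input, your argument establishes the reduction but not the theorem.
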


In addition, we discuss the following relations between interior polynomials and
other important polynomials in combinatorics:
\begin{itemize}
\item
If $G$ is bipartite, then
the interior polynomial of $\widehat{G}$ is described in terms of $k$-matchings of $G$
 (Proposition \ref{kmatching_interior});

\item
If $G$ is a forest, then
the interior polynomial of $\widehat{G}$ coincides with the matching generating polynomial of $G$ (Proposition \ref{prop:forest});

\item
If $G$ is a bipartite permutation graph associated with a poset $P$,
then the interior polynomial of $\widehat{G}$ coincides with the $P$-Eulerian polynomial of $P$ (Proposition \ref{prop:permutation}).
\end{itemize}
By using these results
and a poset appearing in \cite{Stembridge} as a counterexample to Neggers--Stanley conjecture, we will construct an example of a centrally symmetric reflexive polytope such that the $h^*$-polynomial is $\gamma$-positive and not real-rooted (Example \ref{ex:nonreal}).
This $h^*$-polynomial coincides with the $h$-polynomial of a flag triangulation of a sphere (Proposition~\ref{sphere}).
Hence this example is a counterexample to ``Real Root Conjecture'' that has been already disproved by Gal \cite{Gal}.
Finally, inspired by a simple description for the $h^*$-polynomials of symmetric edge polytopes of complete bipartite graphs \cite{HJMsymmetric}, we will compute the $h^*$-polynomial of $\Bc_G$ when $G$ is a complete bipartite graph (Example \ref{ex:complete}).

\subsection*{Acknowledgment}
The authors are grateful to the anonymous referee for his/her careful reading and helpful comments.
The authors were partially supported by JSPS KAKENHI 18H01134 and 16J01549.

\section{A key property of $\Bc_G$ and unimodular coverings}
\label{sec:IDP}
In this section, we see a relation between $\Bc_G$ and  edge polytopes.
First, we recall what edge polytopes are.
Let $G$ be a graph on $[d]$
(only here we do not assume that $G$ has no loops)
with the edge (including loop) set $E(G)$.
Then the {\em edge polytope} $P_G$ of $G$ is 
the convex hull of $\{{\bf e}_i + {\bf e}_j : \{i,j\} \in E(G)\}$.
Note that $P_G$ is a $(0,1)$-polytope if and only if $G$ has no loops.
Given a graph $G$ on $[d]$, let $\widetilde{G}$ be a graph on $[d+1]$
whose edge set is 
$$E(G) \cup \{ \{1, d+1\}, \{2, d+1\}, \ldots, \{d+1,d+1\} \}.$$
Here, $\{d+1,d+1\}$ is a loop (a cycle of length $1$) at $d+1$.
See Figure~\ref{gtilde}.
\begin{figure}[h]
\begin{center}
\includegraphics[width=7cm]{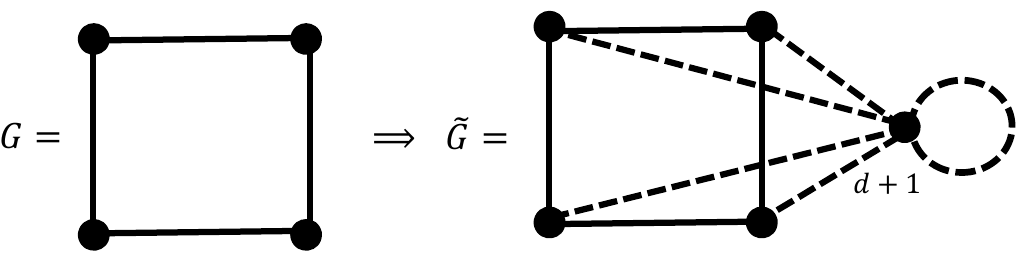} 
\caption{Example of $\widetilde{G}$}
\label{gtilde}
\end{center}
\end{figure}
If $G$ is a bipartite graph with a bipartition
 $V_1 \cup V_2 =[d]$,
 let $\widehat{G}$ be a connected bipartite graph on $[d+2]$
whose edge set is 
\begin{equation}
\label{widehat}
E(\widehat{G}) = E(G) \cup \{ \{i, d+1\}  : i \in V_1\} \cup \{ \{j, d+2\}  : j \in V_2 \cup \{d+1\}\}.
\end{equation}
See Figure~\ref{ghat}.
\begin{figure}[h]
\begin{center}
\includegraphics[width=7cm]{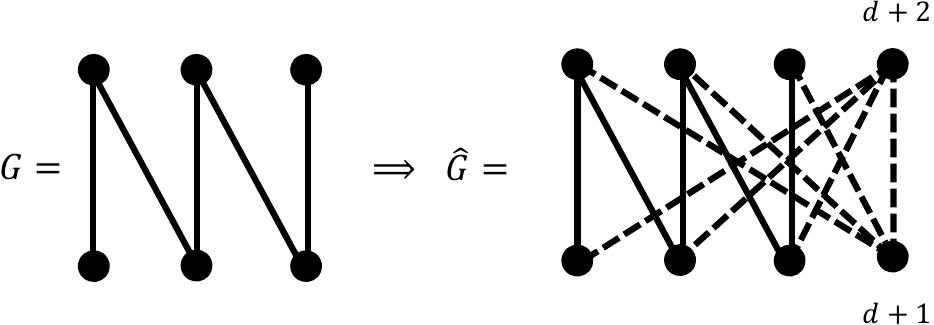} 
\caption{Example of $\widehat{G}$}
\label{ghat}
\end{center}
\end{figure}

Now, we show the key proposition of this paper.
Given $\varepsilon = (\varepsilon_1,\ldots, \varepsilon_d) \in \{-1,1\}^d$,
let ${\mathcal O}_\varepsilon$ denote the closed orthant 
$\{ (x_1,\ldots, x_d) \in \RR^d  : x_i  \varepsilon_i \ge 0 \mbox{ for all } i \in [d]\}$.
For a lattice polytope $\Pc$, let $\Vol(\Pc)$ denote the normalized volume of $\Pc$.
\begin{Proposition}
\label{keylemma}
Work with the same notation as above.
Then we have the following{\rm :}
\begin{itemize}
\item[(a)]
Each $\Bc_G \cap {\mathcal O}_\varepsilon$ is the convex hull of the set
$B(G) \cap {\mathcal O}_\varepsilon$
and unimodularly equivalent to the edge polytope $P_{\widetilde{G}}$ of $\widetilde{G}$.
Moreover, if $G$ is bipartite, then $\Bc_G \cap {\mathcal O}_\varepsilon$ is
unimodularly equivalent to the edge polytope $P_{\widehat{G}}$ of $\widehat{G}$.
In particular, one has $\Vol(\Bc_G)=2^d\Vol(P_{\widetilde{G}})$.
\item[(b)]
The edge polytope of $G$ is a face of $\Bc_G$.
\end{itemize}
\end{Proposition}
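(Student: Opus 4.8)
The plan is to analyze the two pieces (a) and (b) by understanding exactly which vertices of $\Bc_G$ lie in each closed orthant $\Oc_\varepsilon$, and then exhibiting an explicit unimodular map. For part (a), fix $\varepsilon \in \{-1,1\}^d$. Since $\Bc_G = \conv B(G)$ and $B(G)$ is centrally symmetric with all its points lying on the coordinate hyperplanes or their $\pm$-translates, I would first check that the vertices of $\Bc_G$ lying in $\Oc_\varepsilon$ are precisely the points of $B(G)\cap\Oc_\varepsilon$, namely ${\bf 0}$, the vectors $\varepsilon_i{\bf e}_i$ for $i\in[d]$, and the vectors $\varepsilon_i{\bf e}_i+\varepsilon_j{\bf e}_j$ for $\{i,j\}\in E(G)$; every other point of $B(G)$ has a coordinate of the wrong sign, hence lies strictly outside $\Oc_\varepsilon$, and one checks it cannot be recovered as a convex combination of points in $B(G)\cap\Oc_\varepsilon$. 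Thus $\Bc_G\cap\Oc_\varepsilon$ is a polytope whose vertex set is contained in $B(G)\cap\Oc_\varepsilon$; conversely each such point is a vertex of the intersection because it is already a vertex of $\Bc_G$. This gives the first sentence of (a).

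Next I would set up the unimodular equivalence with $P_{\widetilde G}$. Apply the sign change $x_i \mapsto \varepsilon_i x_i$, which is unimodular and carries $\Bc_G\cap\Oc_\varepsilon$ to $\Bc_G\cap\Oc_{(1,\dots,1)} = \conv\{{\bf 0},{\bf e}_1,\dots,{\bf e}_d\}\cup\{{\bf e}_i+{\bf e}_j:\{i,j\}\in E(G)\}$; so it suffices to treat $\varepsilon=(1,\dots,1)$. Now embed $\RR^d$ into $\RR^{d+1}$ via $\xb\mapsto(\xb, 1-\sum_k x_k)$ (an affine unimodular embedding onto the hyperplane $\sum_{k=1}^{d+1}x_k=1$): the origin goes to ${\bf e}_{d+1}$, which equals ${\bf e}_{d+1}+{\bf e}_{d+1}$ up to the loop convention divided by two — more carefully, I recall that for the loop $\{d+1,d+1\}$ the corresponding point of the edge polytope is $2{\bf e}_{d+1}$, so I would instead use the affine lift $\xb\mapsto (\xb,\,2-\sum_k x_k)$ is wrong too; the clean statement is that $P_{\widetilde G}$ lives in the hyperplane $\sum x_k = 2$ (each edge contributes a point with coordinate sum $2$, including the loop which contributes $2{\bf e}_{d+1}$), so I lift by $\xb\mapsto(\xb,\,2-\sum_{k=1}^d x_k)$. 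Under this map ${\bf 0}\mapsto 2{\bf e}_{d+1}$, ${\bf e}_i\mapsto {\bf e}_i+{\bf e}_{d+1}$, and ${\bf e}_i+{\bf e}_j\mapsto{\bf e}_i+{\bf e}_j$, which is exactly the vertex set of $P_{\widetilde G}$; the map is a bijection on lattice points of the respective affine hulls and has a unimodular linear part, so it is a unimodular equivalence. The volume statement $\Vol(\Bc_G)=2^d\Vol(P_{\widetilde G})$ then follows because the $2^d$ orthant pieces have pairwise disjoint interiors, cover $\Bc_G$, and each is unimodularly equivalent to $P_{\widetilde G}$.

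For the bipartite refinement, assume $G$ is bipartite with bipartition $V_1\cup V_2 = [d]$. The point is that $\widehat G$ is obtained from $\widetilde G$ essentially by splitting the loop vertex, and $P_{\widetilde G}\cong P_{\widehat G}$: in a bipartite graph the loop $\{d+1,d+1\}$ (an odd cycle) can be ``resolved'' by replacing $d+1$ with two vertices $d+1, d+2$ joined through the bipartition as in $(\ref{widehat})$, and the resulting edge polytope is unimodularly equivalent — concretely, since $\widehat G$ is bipartite, $P_{\widehat G}$ lies in the intersection of two hyperplanes ($\sum_{V_1\cup\{d+1\}}x_k = 1$ and $\sum_{V_2\cup\{d+2\}}x_k=1$) and one checks the natural identification of generators of $P_{\widetilde G}$ with those of $P_{\widehat G}$ (sending $2{\bf e}_{d+1}\mapsto{\bf e}_{d+1}+{\bf e}_{d+2}$, ${\bf e}_i+{\bf e}_{d+1}\mapsto {\bf e}_i+{\bf e}_{d+2}$ for $i\in V_1$, ${\bf e}_j+{\bf e}_{d+1}\mapsto{\bf e}_j+{\bf e}_{d+1}$ for $j\in V_2$, and fixing ${\bf e}_i+{\bf e}_j$) extends to a unimodular equivalence. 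I would write this out as an explicit integral affine map and verify unimodularity of its linear part by a determinant computation. Finally, part (b): the edge polytope $P_G=\conv\{{\bf e}_i+{\bf e}_j:\{i,j\}\in E(G)\}\subset\RR^d$ is not literally a subset of $\Bc_G$, but $\Bc_G$ contains the points $\tfrac12({\bf e}_i+{\bf e}_j)$; rather, the intended statement is that a copy of $P_G$ appears as a face. I would exhibit the supporting hyperplane: the linear functional $\xb\mapsto\sum_{k=1}^d x_k$ attains its maximum value $2$ on $\Bc_G$ exactly on $\conv\{{\bf e}_i+{\bf e}_j:\{i,j\}\in E(G)\}\cup\{2{\bf e}_i : i\in[d]\}$ — wait, $2{\bf e}_i\notin B(G)$, so the max of $\sum x_k$ over $B(G)$ is $2$, attained only at the points ${\bf e}_i+{\bf e}_j$ with $\{i,j\}\in E(G)$; hence that face equals $\conv\{{\bf e}_i+{\bf e}_j:\{i,j\}\in E(G)\}$, which is a translate/copy of $P_G$. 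The main obstacle I anticipate is bookkeeping the loop convention consistently (whether the loop $\{d+1,d+1\}$ contributes ${\bf e}_{d+1}$ or $2{\bf e}_{d+1}$ to the edge polytope) and making the $P_{\widetilde G}\cong P_{\widehat G}$ identification genuinely unimodular rather than merely a combinatorial bijection of vertices; everything else is a routine sign-change plus affine-lift argument.
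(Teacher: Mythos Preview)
There is a genuine gap in part (a), in the step where you claim that $\Bc_G \cap \Oc_\varepsilon = \conv(B(G)\cap\Oc_\varepsilon)$ follows once you know which points of $B(G)$ lie in $\Oc_\varepsilon$. Knowing the vertices of $\Bc_G$ that lie in $\Oc_\varepsilon$ is not sufficient: intersecting a polytope with a closed orthant can create \emph{new} vertices on the bounding coordinate hyperplanes that were not vertices of the original polytope. Concretely, a point $\xb\in\Bc_G\cap\Oc_\varepsilon$ may arise as a convex combination $\sum\lambda_i\ab_i$ with some $\ab_i\in B(G)\setminus\Oc_\varepsilon$, and you must argue that such a point can be re-expressed using only points of $B(G)\cap\Oc_\varepsilon$. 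The paper handles exactly this with a swap argument: whenever two summands $\ab_i,\ab_j$ have opposite signs in some coordinate $k$, one uses $\ab_i+\ab_j=(\ab_i-\eb_k)+(\ab_j+\eb_k)$ with both new vectors still in $B(G)$; iterating pushes all summands into $\Oc_\varepsilon$. Your proposal offers nothing in place of this step.

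A secondary inaccuracy: you assert each point of $B(G)\cap\Oc_\varepsilon$ ``is already a vertex of $\Bc_G$.'' This is false --- $\mathbf{0}$ is interior to $\Bc_G$, and $\eb_i=\tfrac12(\eb_i+\eb_j)+\tfrac12(\eb_i-\eb_j)$ whenever $i$ has a neighbor $j$, so $\eb_i$ need not be a vertex of $\Bc_G$ either. These points \emph{are} vertices of the intersection $\Bc_G\cap\Oc_\varepsilon$ (they lie on faces of the orthant), but not for the reason you give. The remainder of your outline --- the sign change reducing to the positive orthant, the affine lift $\xb\mapsto(\xb,\,2-\sum_k x_k)$ identifying the positive-orthant piece with $P_{\widetilde G}$, and the supporting hyperplane $\sum_k x_k=2$ for part (b) --- matches the paper's argument, apart from a swapped labeling of $V_1$ and $V_2$ in your bipartite map.
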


\begin{proof}
(a) 
Let $\Pc$ be the convex hull of the set
$B(G) \cap {\mathcal O}_\varepsilon$.
The inclusion $\Bc_G \cap {\mathcal O}_\varepsilon \supset \Pc$ is trivial.
Let $\xb = (x_1,\ldots,x_d) \in \Bc_G \cap {\mathcal O}_\varepsilon$.
Then $\xb = \sum_{i=1}^s \lambda_i \ab_i$, where $\lambda_i >0$,
$\sum_{i=1}^s \lambda_i = 1$, and each $\ab_i$ belongs to $B(G)$.
Suppose that 
the $k$-th component of $\ab_i$ is positive and 
the $k$-th component of $\ab_j$ is negative.
Then $\ab_i$ and $\ab_j$ satisfy 
$$
\ab_i + \ab_j = (\ab_i - \eb_k )+ (\ab_j + \eb_k),
$$
where $\ab_i - \eb_k, \ab_j + \eb_k \in B(G)$.
By using the above relations for $\ab_i + \ab_j$ finitely many times,
we may assume that
the $k$-th component of each vector $\ab_i$ is nonnegative (resp.~nonpositive)
if $x_k \ge 0$ (resp.~$x_k \le 0$).
Then each $\ab_i$ belongs to $B(G) \cap {\mathcal O}_\varepsilon$ and hence
$\xb \in \Pc$.

Next, we show that each $\Bc_G \cap \Oc_{\varepsilon}$ is unimodularly equivalent to the edge polytope $P_{\widetilde{G}}$.
Set $\Qc=\Bc_G \cap \Oc_{(1,\ldots,1)}$. 
It is easy to see that each $\Bc_G \cap \Oc_{\varepsilon}$ is unimodularly equivalent to $\Qc$
for all $\varepsilon$. 
Moreover, one has
$$B(G) \cap  {\mathcal O}_{(1,\ldots,1)}  =  \{{\bf 0}, {\bf e}_1, \ldots,  {\bf e}_d\}
\cup
\{{\bf e}_i + {\bf e}_j   : \{i,j\} \in E(G)\}.$$
Hence $P_{\widetilde{G}}$ 
 is unimodularly equivalent to 
$\Qc \times \{2\}$. In particular, $\Bc_G$ consists of $2^d$ copies of the edge polytope  $P_{\widetilde{G}}$. This implies  $\Vol(\Bc_G)=2^d\Vol(P_{\widetilde{G}})$.
Similarly, if $G$ is bipartite, it follows that 
$P_{\widehat{G}} $ is unimodularly equivalent to $\Qc \times \{1\}^2$.

(b)
The edge polytope $P_G$ of $G$ is the face of $\Bc_G$ with the supporting hyperplane
$
\Hc = 
\{
(x_1,\ldots, x_d) \in \RR^d : x_1 + \cdots + x_d= 2
\}.
$
\end{proof}

Let $\Pc \subset \RR^d$ be a lattice polytope.
A (lattice) \textit{covering} of $\Pc$ is a finite collection $\Delta$ of lattice simplices such that the union of the simplices in $\Delta$ is $\Pc$, i.e., $\Pc=\bigcup_{\delta \in \Delta} \delta$.
A (lattice) \textit{triangulation} of $\Pc$ is a covering $\Delta$ of $\Pc$ such that
\begin{enumerate}
	\item[(i)] every face of a member of $\Delta$ is in $\Delta$, and
	\item[(ii)] any two elements of $\Delta$ intersect in a common (possibly empty) face.
\end{enumerate} 
Note that their interiors may overlap in coverings but not in triangulations.
 A \textit{unimodular simplex} is a lattice simplex whose normalized volume equals $1$.
We now focus on the following properties.
	\begin{itemize}
		\item[(VA)] We say that $\Pc$ is {\em very ample} if for all sufficiently large $k \in \ZZ_{\geq 1}$ and for all $\xb \in k \Pc \cap \ZZ^d$, there exist $\xb_1,\ldots,\xb_k \in \Pc \cap \ZZ^d$ with $\xb=\xb_1+\cdots+\xb_k$.
	\item[(IDP)] We say that $\Pc$ possesses the {\em integer decomposition property} (or is {\em IDP} for short) if for all $k \in \ZZ_{\geq 1}$ and for all $\xb \in k \Pc \cap \ZZ^d$, there exist $\xb_1,\ldots,\xb_k \in \Pc \cap \ZZ^d$ with $\xb=\xb_1+\cdots+\xb_k$.
	\item[(UC)] We say that $\Pc$ has a {\rm unimodular covering} if $\Pc$ admits a lattice covering consisting of unimodular simplices.
	\item[(UT)] We say that $\Pc$ has a {\rm unimodular triangulation} if $\Pc$ admits a lattice triangulation consisting of unimodular simplices.
\end{itemize}
	 These properties satisfy the implications
\[
{\rm(UT)} \Rightarrow {\rm(UC)} \Rightarrow {\rm(IDP)} \Rightarrow {\rm(VA)},
\]
see, e.g., \cite[Section 4.2]{binomialideals}.
On the other hand, it is known that the opposite implications are false. However, for edge polytopes, the first three properties are equivalent (\cite{CHHH, OHnormal, SVV}).
We say that a graph $G$ satisfies the {\em odd cycle condition}
 if, for any two odd cycles $C_1$ and $C_2$ that belong to
the same connected component of $G$
and have no common vertices,
there exists an edge $\{i,j\}$ of $G$ such that $i$ is a vertex of $C_1$
and $j$ is a vertex of $C_2$.

\begin{Proposition}[\cite{CHHH, OHnormal, SVV}]
\label{edge_polytope}
Let $G$ be a finite (not necessarily simple) graph. 
Suppose that there exists an edge $\{i,j\}$ of $G$ 
whenever $G$ has loops at $i$ and $j$ with $i \neq j$.
Then the following conditions are equivalent{\rm :}
\begin{itemize}
\item[(i)]
$P_G$ has a unimodular covering{\rm ;}
\item[(ii)]
$P_G$ is IDP{\rm ;}
\item[(iii)]
$P_G$ is very ample{\rm ;}
\item[(iv)]
$G$ satisfies the odd cycle condition.
\end{itemize}

\end{Proposition}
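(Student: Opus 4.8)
Since (i), (ii) and (iii) are the properties (UC), (IDP) and (VA) for $\Pc=P_G$, the implications (i)$\,\Rightarrow\,$(ii)$\,\Rightarrow\,$(iii) are instances of the chain ${\rm(UC)}\Rightarrow{\rm(IDP)}\Rightarrow{\rm(VA)}$ recorded above. The plan is to close the cycle by proving (iii)$\,\Rightarrow\,$(iv) and (iv)$\,\Rightarrow\,$(i). Throughout I treat a loop at $i$, which contributes the vertex $2\eb_i$ of $P_G$, as a cycle of odd length $1$, so that the standing hypothesis on loops becomes part of the odd cycle condition; I also use that the vertices of $P_G$ are exactly the $\eb_i+\eb_j$ with $\{i,j\}\in E(G)$ (a loop at $i$ giving $2\eb_i$), and that a $0/1$ vector $\eb_a+\eb_b$ lies in $P_G$ precisely when $\{a,b\}\in E(G)$ or $G$ has loops at both $a$ and $b$.

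For (iii)$\,\Rightarrow\,$(iv) I would argue contrapositively, producing from a violation of the odd cycle condition a lattice point of a dilate of $P_G$ admitting no decomposition. If $C_1,C_2$ are vertex-disjoint odd cycles of $G$ joined by no edge, the candidate is $\xb=\sum_{i\in V(C_1)\cup V(C_2)}\eb_i$: summing the edge-vectors around $C_t$ gives $\sum_{\{i,j\}\in E(C_t)}(\eb_i+\eb_j)=2\sum_{i\in V(C_t)}\eb_i$, so $\xb$ is one half of this sum over $C_1$ plus one half over $C_2$, which realizes $\xb\in kP_G\cap\ZZ^d$ with $k=(|E(C_1)|+|E(C_2)|)/2\in\ZZ_{\ge1}$. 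In any would-be decomposition $\xb=\xb_1+\cdots+\xb_k$ with $\xb_t\in P_G\cap\ZZ^d$, each $\xb_t$ is a $0/1$ vector $\eb_{a_t}+\eb_{b_t}$ with $\{a_t,b_t\}\subseteq V(C_1)\cup V(C_2)$ and $a_t\neq b_t$; counting ends (with multiplicity) in $V(C_1)$ gives $\sum_{i\in V(C_1)}\xb_i=|V(C_1)|$, which is odd, so some $\xb_t$ has exactly one of $a_t,b_t$ in $V(C_1)$ and hence the other in $V(C_2)$, forcing — via the description above, and the standing hypothesis in the degenerate case where both endpoints carry loops — an edge of $G$ between $V(C_1)$ and $V(C_2)$, a contradiction. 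Since adjoining to $\xb$ any number of edge-vectors supported inside $V(C_1)$ preserves the obstruction at arbitrarily high degree, $P_G$ is not very ample.

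For (iv)$\,\Rightarrow\,$(i), assuming the odd cycle condition I would construct a unimodular covering of $P_G$ explicitly. The building blocks are the lattice simplices spanned by the edge-vectors of the spanning subgraphs of $G$ each of whose connected components is a tree or carries a single cycle, that cycle being odd (spanning trees when $G$ is bipartite); such ``odd pseudoforest'' edge configurations are classically unimodular, so each such simplex is unimodular and contained in $P_G$. The substance — and the step I expect to be the main obstacle — is to show that these simplices cover all of $P_G$. I would do this by a support-reduction argument in the spirit of the proof of Proposition~\ref{keylemma}(a): given $\xb\in P_G\cap\QQ^d$, write $\xb$ as a rational convex combination of edge-vectors, let $H$ be the subgraph of the edges actually used, and simplify $H$ repeatedly — if a component of $H$ carries an even cycle it is bipartite there and an alternating-weight shift along the cycle strictly shrinks the support, while if a component carries two odd cycles the edge between them furnished by the odd cycle condition lets one fuse the two odd cycles into one and again reduce — until $\xb$ lies in one of the odd-pseudoforest simplices. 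The delicate points are that the reductions terminate and never leave $P_G$, and this is precisely where the odd cycle condition enters; the details, along with the equivalence involving (iii), are carried out in \cite{OHnormal}, \cite{SVV} and \cite{CHHH}.
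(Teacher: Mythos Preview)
The paper does not give its own proof of this proposition; it is quoted as a known result from \cite{CHHH, OHnormal, SVV}. Your sketch is faithful to the arguments in those sources: the contrapositive for (iii)$\Rightarrow$(iv) via the parity of $\sum_{i\in V(C_1)} x_i$, together with its persistence after adjoining edge-vectors supported in $V(C_1)$ (which is what upgrades the IDP obstruction to a very-ample one), is exactly the mechanism in \cite{OHnormal} and \cite{CHHH}; and for (iv)$\Rightarrow$(i) you correctly identify the unimodular simplices spanned by odd-pseudoforest spanning subgraphs and defer the covering argument to the same references. There is thus nothing substantive to compare---your write-up is an expanded version of the citations the paper already gives.
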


We now show that the same assertion holds for $\Bc_G$.
Namely, we prove the following.
\begin{Theorem}
	\label{OCCforBG}
	Let $G$ be a finite simple graph. 
	Then the following conditions are equivalent{\rm :}
	\begin{itemize}
		\item[(i)]
		$\Bc_G$ has a unimodular covering{\rm ;}
		\item[(ii)]
		$\Bc_G$ is IDP{\rm ;}
		\item[(iii)]
		$\Bc_G$ is very ample{\rm ;}
		\item[(iv)]
		$G$ satisfies the odd cycle condition.
	\end{itemize}	
\end{Theorem}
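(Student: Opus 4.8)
The plan is to deduce everything from Proposition~\ref{keylemma} together with the known characterization for edge polytopes in Proposition~\ref{edge_polytope}. Since the general implications ${\rm (UT)}\Rightarrow{\rm (UC)}\Rightarrow{\rm (IDP)}\Rightarrow{\rm (VA)}$ already give ${\rm (i)}\Rightarrow{\rm (ii)}\Rightarrow{\rm (iii)}$, it suffices to prove ${\rm (iv)}\Rightarrow{\rm (i)}$ and ${\rm (iii)}\Rightarrow{\rm (iv)}$, so that the four conditions form a cycle. Throughout I would use that the lattice points of $\Bc_G$ are exactly the points of $B(G)$: by Proposition~\ref{keylemma} each $\Bc_G\cap\Oc_\varepsilon$ is unimodularly equivalent to $P_{\widetilde{G}}$, and a lattice point of $P_{\widetilde{G}}$ has nonnegative integer coordinates summing to $2$, from which a short case check shows it must be a vertex $\eb_i+\eb_j$ with $\{i,j\}\in E(\widetilde{G})$; pulling this back through the $2^d$ orthants gives $\Bc_G\cap\ZZ^d=B(G)$.

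The combinatorial heart is the observation that $\widetilde{G}$ satisfies the odd cycle condition if and only if $G$ does. This is a direct check: the vertex $d+1$ of $\widetilde{G}$ is adjacent to every vertex of $G$, so any cycle of $\widetilde{G}$ passing through $d+1$ (in particular the loop at $d+1$) is joined by an edge of $\widetilde{G}$ to every cycle vertex-disjoint from it, hence never obstructs the condition; the only potential obstructions in $\widetilde{G}$ come from pairs of vertex-disjoint cycles avoiding $d+1$, which are precisely pairs of vertex-disjoint cycles of $G$, and a joining edge for such a pair lies in $\widetilde{G}$ exactly when it lies in $G$. Granting this, Proposition~\ref{edge_polytope} applied to $\widetilde{G}$ (legitimate, since $\widetilde{G}$ has only the one loop at $d+1$) shows that $G$ satisfying the odd cycle condition is equivalent to $P_{\widetilde{G}}$ having a unimodular covering, equivalently being IDP, equivalently being very ample.

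For ${\rm (iv)}\Rightarrow{\rm (i)}$: if $G$ satisfies the odd cycle condition then $P_{\widetilde{G}}$ has a unimodular covering, hence so does each $\Bc_G\cap\Oc_\varepsilon$ by Proposition~\ref{keylemma}, and the union of these $2^d$ coverings is a unimodular covering of $\Bc_G=\bigcup_\varepsilon(\Bc_G\cap\Oc_\varepsilon)$. For ${\rm (iii)}\Rightarrow{\rm (iv)}$, the point to prove is that very ampleness passes from $\Bc_G$ down to each piece $\Bc_G\cap\Oc_\varepsilon$, whence $P_{\widetilde{G}}$ is very ample and we conclude as above. Fix $\varepsilon$ and a lattice point $\xb\in k(\Bc_G\cap\Oc_\varepsilon)$ with $k$ large; since $k\Bc_G=\bigcup_\varepsilon k(\Bc_G\cap\Oc_\varepsilon)$ and $\xb\in k\Bc_G\cap\ZZ^d$, very ampleness of $\Bc_G$ gives $\xb=\yb_1+\cdots+\yb_k$ with each $\yb_i\in\Bc_G\cap\ZZ^d=B(G)$. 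Now I would run the straightening step from the proof of Proposition~\ref{keylemma}(a): whenever two summands have opposite signs in a coordinate $m$, they are of the form $\yb_i$ with $m$-th entry $1$ and $\yb_j$ with $m$-th entry $-1$, and replacing the pair by $(\yb_i-\eb_m)+(\yb_j+\eb_m)$ keeps both vectors in $B(G)$ and fixes the sum; iterating over all coordinates forces every $\yb_i$ into $B(G)\cap\Oc_\varepsilon\subseteq(\Bc_G\cap\Oc_\varepsilon)\cap\ZZ^d$, the desired decomposition of $\xb$ inside $\Bc_G\cap\Oc_\varepsilon$. The same straightening shows that IDP localizes as well, so one could equally route the cycle through ${\rm (ii)}\Rightarrow{\rm (iv)}$.

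The step I expect to be the real obstacle is precisely this last localization, the ``$\Bc_G\Rightarrow$ pieces'' direction: a decomposition of a lattice point of $\Bc_G$ may a priori involve lattice points scattered over all $2^d$ orthants, and one must rewrite it using only lattice points of a single orthant. The cancellation trick above accomplishes this, but it is what makes the argument more than a formal transfer, and it rests on the identification $\Bc_G\cap\ZZ^d=B(G)$ so that $\yb_i\mp\eb_m$ remains a lattice point of $\Bc_G$. By contrast, the ``pieces $\Rightarrow\Bc_G$'' direction and the equivalence of the odd cycle condition for $G$ and $\widetilde{G}$ are routine.
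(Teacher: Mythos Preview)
Your argument is correct, and your treatment of ${\rm (iv)}\Rightarrow{\rm (i)}$ matches the paper's exactly. For ${\rm (iii)}\Rightarrow{\rm (iv)}$, however, you take a different and longer route than the paper. You push very ampleness from $\Bc_G$ down to each orthant piece $\Bc_G\cap\Oc_\varepsilon\simeq P_{\widetilde{G}}$ via the straightening trick, then invoke Proposition~\ref{edge_polytope} for $\widetilde{G}$ and your equivalence of the odd cycle condition for $G$ and $\widetilde{G}$. The paper instead uses Proposition~\ref{keylemma}(b): the edge polytope $P_G$ of $G$ itself is a \emph{face} of $\Bc_G$, and very ampleness is inherited by faces, so if $G$ fails the odd cycle condition then $P_G$ is not very ample (Proposition~\ref{edge_polytope} applied directly to $G$), hence neither is $\Bc_G$. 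This is a one-line argument and avoids both the straightening and the passage through $\widetilde{G}$. What your approach buys is that it shows Proposition~\ref{keylemma}(a) alone, without (b), already suffices for the whole theorem; what the paper's approach buys is brevity, since inheritance of very ampleness by faces is standard and requires no rewriting of decompositions. You correctly identified the localization step as the crux of your route, but the paper sidesteps it entirely by working with the face $P_G$ rather than the orthant pieces $P_{\widetilde{G}}$.
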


Before proving Theorem \ref{OCCforBG}, we recall a well-known result.
\begin{Lemma}\label{face}
Every face of a very ample polytope is very ample.
\end{Lemma}
\begin{proof}
	Let $\Pc \subset \RR^d$ be a lattice polytope of dimension $d$.
Assume that there exists a facet $\Fc$ of $\Pc$ such that $\Fc$ is not very ample.
Then we should show that $\Pc$ is not very ample.
Let $\Hc \subset \RR^d$ be the hyperplane in $\RR^d$ defined as
$\Hc=\{ \xb \in \RR^d \mid \langle \xb, \ab \rangle=b \}$,
with some lattice point $\ab \in \ZZ^d$ and some integer $b$
such that $\Fc=\Hc \cap \Pc$ and $\Pc \subset \Hc^{(+)}$,
where 
$\Hc^{(+)}=\{ \xb \in \RR^d \mid \langle\xb, \ab \rangle \leq b \}$.
Since $\Fc$ is not very ample, there exist a positive integer $k$ and a lattice point $\xb \in k \Fc \cap \ZZ^d$ such that there are no lattice points $\xb_1, \ldots, \xb_k$ in $\Fc$ with $\xb=\xb_1+\cdots+\xb_k$. 
Suppose that there are lattice points $\yb_1, \ldots, \yb_k$ in $\Pc$ with $\xb=\yb_1+\cdots+\yb_k$.  
Then for each $\yb_i$, one has $\langle \yb_i, \ab \rangle \leq b$.
Since $\langle \xb, \ab \rangle = kb$, it follows that
for each $i$, $\langle \yb_i, \ab \rangle=b$, hence, $\yb_i \in \Fc$.
This is a contradiction. 
Therefore, $\Pc$ is not very ample.
\end{proof}
\begin{Remark}
	Similarly, we can prove that every face of an IDP polytope is IDP. Moreover, it is clear that if a lattice polytope has a unimodular triangulation (resp. a unimodular covering), then so does any face.
\end{Remark}
Now, we prove Theorem \ref{OCCforBG}.
\begin{proof}[Proof of Theorem \ref{OCCforBG}]
First, implications (i) $\Rightarrow$ (ii) $\Rightarrow$ (iii) hold in general.

(iii) $\Rightarrow$ (iv):
Suppose that $G$ does not satisfy the odd cycle condition.
By Proposition~\ref{edge_polytope}, the edge polytope $P_G$ of $G$ is not very ample.
Since $P_G$ is a face of $\Bc_G$ by Proposition~\ref{keylemma} (b) and Lemma \ref{face}, $\Bc_G$ is not very ample.

(iv) $\Rightarrow$ (i):
Suppose that $G$ satisfies the odd cycle condition.
Then so does $\widetilde{G}$.
Hence Proposition~\ref{edge_polytope} guarantees that
$P_{\widetilde{G}}$ has a unimodular covering.
By Proposition~\ref{keylemma} (a), 
$\Bc_G$ has a unimodular covering. 
\end{proof}

\begin{Example}
Let $G$ be a graph in Figure \ref{star}.
\begin{figure}[h]
\begin{center}
\includegraphics[width=3cm]{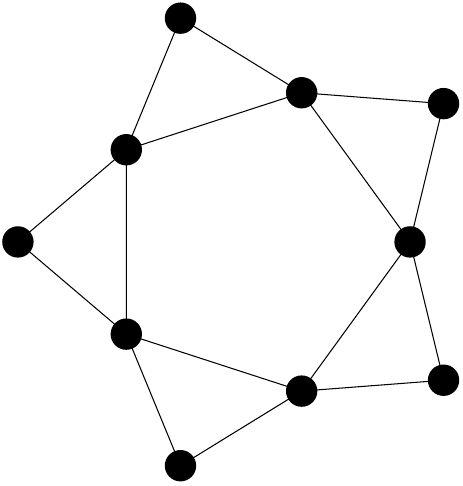} 
\caption{A graph in \cite{OHnoregunitri}}
\label{star}
\end{center}
\end{figure}
Since $G$ satisfies the odd cycle condition, $\Bc_G$ has a unimodular covering.
However, since the edge polytope $P_G$ has no regular unimodular triangulations
(\cite{OHnoregunitri}),
so does $\Bc_G$ by Proposition~\ref{keylemma} (b).
We do not know whether $\Bc_G$ has a (nonregular) unimodular triangulation or not.
\end{Example}


\section{Reflexive polytopes and flag triangulations of $\Bc_G$}
\label{sec:ref}
In the present section, we classify graphs $G$ such that
\begin{itemize}
	\item
	$\Bc_G$ is a reflexive polytope.
	\item
	$\Bc_G$ is a reflexive polytope with a flag regular unimodular triangulation.
\end{itemize}
In other words, we prove Theorems \ref{reflexiveBG} and \ref{thm:flag}.
First, we see some examples that $\Bc_G$ is reflexive.
\begin{Examples}
\label{ex}
	(a)
	If $G$ is an empty graph, then  $\Bc_G$ is a {\em cross polytope}.
	
	(b) 
	Let $G$ be a complete graph with $2$ vertices.
	Then $\Bc_G \cap \ZZ^2$ is the column vectors of the matrix
	$$
	\begin{pmatrix}
	0 & 1 & 0 & 1 & 1 &-1 & 0 & -1 & -1\\
	0 & 0 & 1 & 1 &-1 & 0 &-1 & -1 & 1
	\end{pmatrix},
	$$
	and $\Bc_G$ is a reflexive polytope having a regular unimodular triangulation.
See Figure~\ref{ex21}.
\begin{figure}
\begin{center}
\includegraphics[width=3cm]{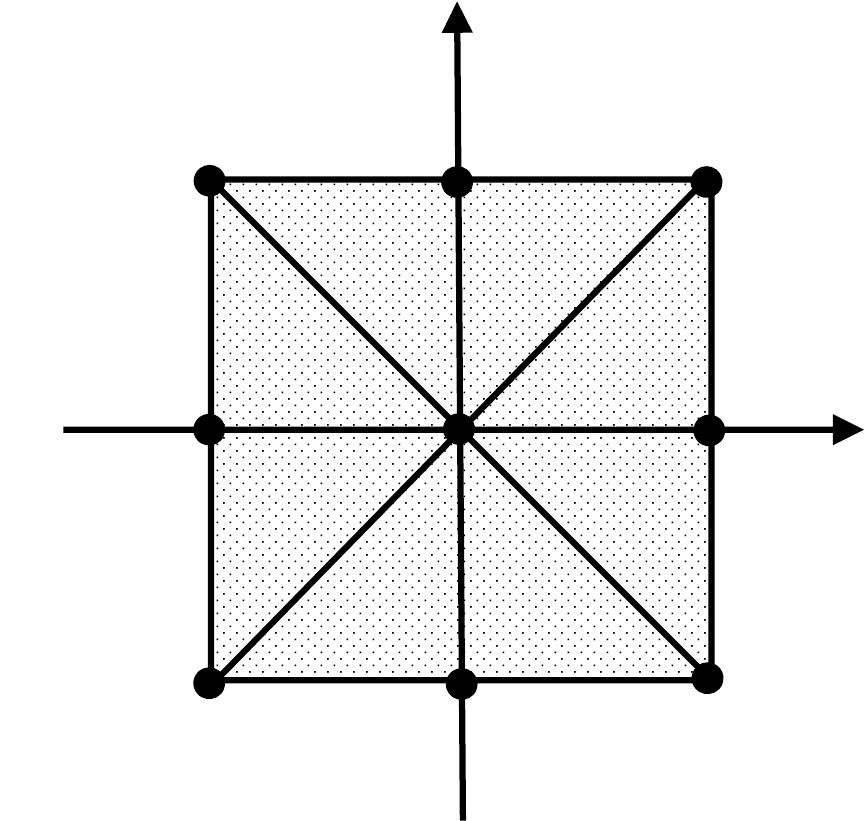} 
\caption{Regular unimodular triangulation of $\Bc_G$ in Examples \ref{ex} (b)}
\label{ex21}
\end{center}
\end{figure}	Since the matrix
	$$
	\begin{pmatrix}
	1 & 0 & 1 & 1 \\
	0 & 1 & 1 &-1 
	\end{pmatrix}
	$$
	is not unimodular, we cannot apply \cite[Lemma 2.11]{CSC} to show this fact.
\end{Examples}
In order to show that a lattice polytope is reflexive, we can use an algebraic technique on Gr\"obner bases.
We recall basic materials and notation on toric ideals.
Let $K[{\bf t}^{\pm1}, s] 
= K[t_{1}^{\pm1}, \ldots, t_{d}^{\pm1}, s]$
be the Laurent polynomial ring in $d+1$ variables over a field $K$. 
If $\ab = (a_{1}, \ldots, a_{d}) \in \ZZ^{d}$, then
${\bf t}^{\ab}s$ is the Laurent monomial
$t_{1}^{a_{1}} \cdots t_{d}^{a_{d}}s \in K[{\bf t}^{\pm1}, s]$. 
Let  $\Pc \subset \RR^{d}$ be a lattice polytope and $\Pc \cap \ZZ^d=\{\ab_1,\ldots,\ab_n\}$.
Then, the \textit{toric ring}
of $\Pc$ is the subring $K[\Pc]$ of $K[{\bf t}^{\pm1}, s] $
generated by
$\{\tb^{\ab_1}s ,\ldots,\tb^{\ab_n}s \}$ over $K$.
We regard $K[\Pc]$ as a graded ring by setting each $\text{deg } t_i =0$ and $\text{deg } s=1$.
Note that, if two lattice polytopes are unimodularly equivalent, then their toric rings are isomorphic as graded rings.
Let $K[\xb]=K[x_1,\ldots, x_n]$ denote the polynomial ring in $n$ variables over $K$.
The \textit{toric ideal} $I_{\Pc}$ of $\Pc$ is the kernel of the surjective homomorphism $\pi : K[\xb] \rightarrow K[\Pc]$
defined by $\pi(x_i)=\tb^{\ab_i}s$ for $1 \leq i \leq n$.
It is known that $I_{\Pc}$ is generated by homogeneous binomials.
See, e.g., \cite[Chapter~3]{binomialideals} and \cite[Chapter~4]{sturmfels1996}.
Let $<$ be a monomial order on $K[{\bf x}]$ and 
${\rm in}_{<}(I_{\Pc})$ the initial ideal of $I_{\Pc}$ with respect to $<$.
The initial ideal ${\rm in}_{<}(I_{\Pc})$ is called {\em squarefree} 
(resp.~{\em quadratic}) if 
${\rm in}_{<}(I_{\Pc})$ is generated by squarefree (resp.~{quadratic}) monomials.
We recall a relation between initial ideal ${\rm in}_{<}(I_{\Pc})$ and a triangulation of $\Pc$.
Set
\[
\Delta(\Pc, <)= \left\{ \conv (S) : S \subset \Pc \cap \ZZ^d, \prod_{\ab_i \in S} x_i \notin \sqrt{ {\rm in}_{<}(I_{\Pc})} \right\}.
\]
Then we have 
\begin{Proposition}[{\cite[Theorem 8.3]{sturmfels1996}}]
Let $\Pc \subset \RR^d$ be a lattice polytope and $<$ a monomial order on $K[{\bf x}]$.
Then $\Delta(\Pc, <)$ is a regular triangulation of $\Pc$.
\end{Proposition}
Here we say that a lattice triangulation $\Delta$ of a lattice polytope $\Pc \subset \RR^d$ is \textit{regular} if it arises as the projection of the lower hull of a lifting of the lattice points of $\Pc$ into $\RR^{d+1}$.
A simplicial complex is called \textit{flag}
 if all minimal non-faces contain only two elements. We say that a lattice triangulation is \textit{flag} if it is flag as a simplicial complex.
 Then we have
 \begin{Proposition}[{\cite[Corollary 8.9]{sturmfels1996}}]
 	Let $\Pc \subset \RR^d$ be a lattice polytope and $<$ a monomial order on $K[{\bf x}]$. Suppose that any lattice point in $\ZZ^{d+1}$ is a linear integer combination of the lattice points in $\Pc \times \{1\}$.
 	Then  $\Delta(\Pc, <)$ is unimodular (resp. flag unimodular) if and only if ${\rm in}_{<}(I_{\Pc})$ is squarefree (resp. squarefree quadratic).
 \end{Proposition}

By definition, a lattice polytope $\Pc \subset \RR^d$ is reflexive if and only if the lattice distance between the origin ${\bf 0}$ and all affine hyperplanes generated by facets of $\Pc$ equals $1$.
Hence if  ${\bf 0}$ is contained in the interior of $\Pc$, and  $\Pc$ has a unimodular triangulation such that ${\bf 0}$ is a vertex of any maximal simplex of the triangulation, then $\Pc$ is reflexive.
Now, we introduce an algebraic technique to show that a lattice polytope is reflexive.

\begin{Lemma}[{\cite[Lemma 1.1]{HMOS}}]
	\label{HMOS}
	Let $\Pc \subset \RR^d$ be a lattice polytope of dimension $d$ such that the origin of $\RR^d$ is contained 
	in its interior and $\Pc \cap \ZZ^d=\{\ab_1,\ldots,\ab_n \}$.
	Suppose that 
$\ZZ^d = \{\sum_{i=1}^n z_i \ab_i : z_i \in \ZZ \}$ 
and there exists an ordering of the variables $x_{i_1} < \cdots < x_{i_n}$ for which $\ab_{i_1}= \mathbf{0}$ such that the initial ideal $\textnormal{in}_{<}(I_{\Pc})$ of $I_{\Pc}$ with respect to the reverse lexicographic order $<$ on $K[\xb]$
	induced by the ordering is squarefree.
	Then $\Pc$ is reflexive and has a regular unimodular triangulation.
\end{Lemma}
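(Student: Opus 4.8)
The plan is to recall the standard dictionary between squarefree initial ideals, regular unimodular triangulations, and normality, and then to invoke the combinatorial characterization of reflexivity in terms of the toric ring being Gorenstein with $a$-invariant $-1$. First I would observe that, by the hypothesis on the lattice generated by $\ab_1,\dots,\ab_n$, the map $\ZZ^{n}\to\ZZ^{d+1}$ sending $\eb_i\mapsto(\ab_i,1)$ has the property that its image is saturated in the appropriate sense, so that $K[\Pc]$ is exactly the semigroup ring of the cone over $\Pc$ intersected with the correct lattice; this is what makes geometric reasoning about $\Pc$ match algebraic reasoning about $I_\Pc$. Next, the theorem of Sturmfels relating squarefree initial ideals to regular unimodular triangulations gives that, since $\textnormal{in}_{<}(I_\Pc)$ is squarefree, $\Pc$ has a regular unimodular triangulation $\Delta$; in particular $\Pc$ is IDP and $K[\Pc]$ is normal and Cohen--Macaulay.

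The heart of the argument is the reverse lexicographic order with the variable $x_{i_1}$ corresponding to $\ab_{i_1}=\mathbf 0$ being the \emph{smallest}. The key point is that for a reverse lexicographic order, the smallest variable $x_{i_1}$ does not divide any minimal generator of $\textnormal{in}_{<}(I_\Pc)$ unless $x_{i_1}$ itself lies in a binomial relation of a special shape; since $\ab_{i_1}=\mathbf 0$ is the origin (the interior point), one checks that $x_{i_1}$ is a non-zerodivisor modulo $\textnormal{in}_{<}(I_\Pc)$, equivalently that $x_{i_1}$ belongs to every maximal face of the triangulation $\Delta$ — i.e. the origin is a vertex of every maximal simplex, so $\Delta$ is a cone (a "pulling" triangulation) from the origin. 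From this, the $h^*$-vector of $\Pc$ equals the $h$-vector of the boundary triangulation of $\partial\Pc$ induced by $\Delta$, and one reads off that $K[\Pc]$ has $a$-invariant $-1$. By the Danilov--Stanley / Hochster-type criterion, a normal lattice polytope with $\mathbf 0$ in its interior whose toric ring is Gorenstein with $a$-invariant $-1$ is reflexive (equivalently: the dual $\Pc^\vee$ is again a lattice polytope). Combining, $\Pc$ is reflexive and has a regular unimodular triangulation.

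The main obstacle is establishing that the reverse lexicographic order with $\ab_{i_1}=\mathbf 0$ last forces the triangulation to be a cone over the origin, and extracting from this precisely that the $a$-invariant is $-1$ (rather than merely that $\Pc$ is normal). Concretely, I would argue: squarefreeness of $\textnormal{in}_<(I_\Pc)$ with $x_{i_1}$ smallest implies $x_{i_1}\nmid u$ for every minimal monomial generator $u$ of $\textnormal{in}_<(I_\Pc)$ (a standard property of revlex: if $x_{i_1}\mid u$ then $u/x_{i_1}$ would already be in the ideal, contradicting minimality, using that $\ab_{i_1}=\mathbf0$ so $x_{i_1}$ divides $\pi$-preimages in a controlled way). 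Hence $\{x_{i_1}\}\cup(\text{any facet of }\Delta\text{ not containing }i_1)$ is a non-face-free configuration, forcing $i_1$ into every facet; so $\Delta$ is the pulling triangulation at $\mathbf 0$ and its non-origin faces triangulate $\partial\Pc$. Since $\partial\Pc$ is unimodularly triangulated, each boundary facet has normalized volume $1$, so $\Pc^\vee$ has integral vertices, giving reflexivity; and the cone structure gives the Gorenstein/$a=-1$ statement via the formula $h^*(\Pc,x)=\sum_{\sigma}x^{\dim\sigma}$ over interior faces $\sigma$ of $\Delta$ together with reflexivity-induced palindromicity. Everything else is routine bookkeeping with the correspondence between initial ideals and triangulations.
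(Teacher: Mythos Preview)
The paper does not prove this lemma; it is quoted verbatim from \cite{HMOS} without argument, so there is no ``paper's own proof'' to compare against. Your sketch is essentially the standard argument behind that result and is correct in outline: Sturmfels' correspondence \cite[Corollary~8.9]{sturmfels1996} turns a squarefree initial ideal into a regular unimodular triangulation $\Delta$, and the revlex property together with primality of the toric ideal $I_\Pc$ guarantees that the smallest variable $x_{i_1}$ divides no minimal generator of $\textnormal{in}_<(I_\Pc)$, so the origin lies in every maximal simplex of $\Delta$. From there, each maximal simplex is $\conv(\mathbf{0},\ab_{j_1},\dots,\ab_{j_d})$ with $\ab_{j_1},\dots,\ab_{j_d}$ a $\ZZ$-basis of $\ZZ^d$; the facet of $\Pc$ through these $d$ points therefore has an integral primitive inner normal at height $1$, so $\Pc^\vee$ is a lattice polytope and $\Pc$ is reflexive.

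One small point worth tightening: your justification for ``$x_{i_1}\nmid u$ for every minimal generator $u$'' should explicitly invoke primality of $I_\Pc$. The clean argument is: if $x_{i_1}$ divides $\textnormal{in}_<(f)$ for a homogeneous binomial $f\in I_\Pc$, then the revlex property forces $x_{i_1}$ to divide both monomials of $f$, and since $I_\Pc$ is prime and $x_{i_1}\notin I_\Pc$ we get $f/x_{i_1}\in I_\Pc$, so $\textnormal{in}_<(f)/x_{i_1}\in\textnormal{in}_<(I_\Pc)$. Your phrase about ``$\pi$-preimages in a controlled way'' obscures this. Also, the detour through Gorensteinness and the $a$-invariant is redundant once you have the direct geometric argument that $\Pc^\vee$ has integral vertices; either route works, but mixing both makes the write-up harder to follow.
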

\begin{proof}
	The assertion follows since ${\bf 0}$ is contained in the interior of $\Pc$ and  since the triangulation $\Delta(\Pc, <)$ is unimodular and ${\bf 0}$ is a vertex of any maximal simplex of the triangulation.
	\end{proof}

By using this technique, several families of reflexive polytopes with regular unimodular triangulations are constructed in \cite{twin,HMOS,HMT1,HTomega,HTperfect,HTos,harmony}.
In order to apply Lemma \ref{HMOS} to show Theorem \ref{reflexiveBG}, we see a relation between the toric ideal of $\Bc_{G}$ and that of $P_{\widetilde{G}}$. 
Let $G$ be a simple graph on $[d]$ with edge set $E(G)$ and let $R_G$ denote the polynomial ring
in $2d+1+4 |E(G)|$ variables 
$$
z, \ \ x_{i+}, x_{i-} \ \  (1 \leq i  \leq d), \ \ 
y_{ij++}, y_{ij--}, y_{ij+-}, y_{ij-+} \ \  (\{i,j\} \in E(G))
$$
over a field $K$.
Then the toric ideal $I_{\Bc_G}$ of $\Bc_G$ is the kernel of a ring homomorphism 
 $\pi: R_G \rightarrow K[t_1^\pm,\ldots,t_d^\pm, s]$
defined by
$\pi(z) = s$, 
$\pi(x_{i+}) = t_i s$, 
$\pi(x_{i-}) = t_i^{-1} s$, 
$\pi(y_{ij++}) = t_i t_j s$, 
$\pi(y_{ij--}) = t_i^{-1} t_j^{-1} s$, 
$\pi(y_{ij+-}) = t_i t_j^{-1} s$, 
and
$\pi(y_{ij-+}) = t_i^{-1} t_j s$.
Let $S_G$ denote the subring of $R_G$ generated by
the $d+1+ |E(G)|$ variables 
$$
z, \ \  x_{i+} \ \ (1 \leq i  \leq d),
\ \  y_{ij++} \ \ (\{i,j\} \in E(G)).
$$
Then the toric ideal $I_{P_{\widetilde{G}}}$ of $P_{\widetilde{G}}$
is the kernel of $\pi|_{S_G}$.
For each $\varepsilon = (\varepsilon_1,\ldots, \varepsilon_d)  \in \{-1,1\}^d$, 
we define a ring homomorphism $\varphi_\varepsilon: S_G \rightarrow R_G$
by $\varphi_\varepsilon(x_{i+}) =x_{i \alpha}$ and  $\varphi_\varepsilon(y_{ij++}) =y_{ij \alpha \beta}$ where 
$\alpha$ is the sign of $\varepsilon_i$ and $\beta$ is the sign of $\varepsilon_j$.
In particular, $\varphi_{(1,\ldots,1)}: S_G \rightarrow R_G$ is an inclusion map.

\begin{Lemma}
\label{GBlemma}
Let $\Gc$ be a Gr\"obner basis of $I_{P_{\widetilde{G}}}$ with respect to a reverse lexicographic order
$<_S$ on $S_G$ such that $z < \{x_{i+}\} < \{y_{ij++}\}$.
Let $<_R$ be a reverse lexicographic order such that
 $z < \{x_{i+}, x_{i-}\} < \{y_{ij++}, y_{ij--}, y_{ij+-}, y_{ij-+}\}$
and that {\rm (i)}
$
\varphi_\varepsilon(x_{i+}) <_R \varphi_\varepsilon(x_{j+})  
\mbox{ if }
x_{i+} <_S x_{j+}
$
and {\rm (ii)}
$
\varphi_\varepsilon(y_{ij++}) <_R \varphi_\varepsilon(y_{k\ell++})  
\mbox{ if }
y_{ij++} <_S y_{k\ell++}
$
for all $\varepsilon   \in \{-1,1\}^d$.
Then
\begin{eqnarray*}
\Gc' &=& 
\left(\bigcup_{\varepsilon   \in \{-1,1\}^d} \varphi_\varepsilon(\Gc)  \right)
\cup
\{
\underline{ x_{i \alpha} y_{i j \beta \gamma} } - x_{j \gamma} z  : \{i,j\} \in E(G),
\alpha \neq \beta
\}\\
&\cup&
\{
\underline{ y_{i j \alpha \gamma} y_{i k \beta \delta} } - x_{j \gamma} x_{k \delta}  : \{i,j\} ,\{i,k\}\in E(G),
\alpha \neq \beta
\}
\cup
\{
\underline{ x_{i+} x_{i-}} - z^2  : 1 \le i \le d
\}
\end{eqnarray*}
is a Gr\"obner basis of $I_{\Bc_G}$ with respect to $<_R$,
where the underlined monomial is the initial monomial of each binomial.
{\rm (}Here we identify $y_{ij \alpha \beta}$ with $y_{ji  \beta \alpha}$.{\rm )}
In particular, if ${\rm in}_{<_S}( I_{P_{\widetilde{G}}})$ is squarefree {\rm (}resp. quadratic{\rm )},
then so is ${\rm in}_{<_R} (I_{\Bc_G})$.
\end{Lemma}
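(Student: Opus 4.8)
The plan is to verify the claimed generating set $\Gc'$ is a Gröbner basis by a direct application of Buchberger's criterion, organized around the orthant decomposition from Proposition~\ref{keylemma}. First I would check that every element of $\Gc'$ lies in $I_{\Bc_G}$: for $\varphi_\varepsilon(\Gc)$ this is immediate since $\varphi_\varepsilon$ is compatible with the toric parametrization (it just records the sign pattern $\varepsilon$, so $\pi\circ\varphi_\varepsilon$ sends $x_{i+}\mapsto t_i^{\varepsilon_i}s$, etc.), and $\varphi_\varepsilon$ maps binomials to binomials; for the three extra families the membership is a one-line monomial check (e.g. $\pi(x_{i\alpha}y_{ij\beta\gamma}) = t_i^{\alpha}t_i^{\beta}t_j^{\gamma}s^2 = t_j^{\gamma}s^2 = \pi(x_{j\gamma}z)$ when $\alpha\neq\beta$, using $\alpha\beta=-1$). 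I would also confirm the underlined monomials really are the $<_R$-leading terms: conditions (i) and (ii) are exactly engineered so that within each block $\varphi_\varepsilon$ transports $<_S$-leading terms to $<_R$-leading terms, and for the extra families one uses $z<\{x_{i\pm}\}<\{y_{\bullet}\}$ together with the reverse-lexicographic tie-breaking.

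The core of the argument is an S-pair computation. I would partition the pairs of elements of $\Gc'$ into cases by which families the two binomials come from. Pairs of the form $(\varphi_\varepsilon(g),\varphi_\varepsilon(g'))$ with $g,g'\in\Gc$ and the \emph{same} $\varepsilon$ reduce to zero because $\Gc$ is already a Gröbner basis for $I_{P_{\widetilde G}}$ and $\varphi_\varepsilon$ is an isomorphism onto the subring $S_G^{(\varepsilon)}:=\varphi_\varepsilon(S_G)$ respecting the orders; here one invokes Buchberger on $\Gc$ and pushes the standard expression forward through $\varphi_\varepsilon$. Pairs $(\varphi_\varepsilon(g),\varphi_{\varepsilon'}(g'))$ with $\varepsilon\neq\varepsilon'$ have leading terms involving variables with incompatible signs (some $x_{i+}$ vs.\ $x_{i-}$, or $y_{ij+\pm}$ vs.\ $y_{ij-\pm}$), so either their leading monomials are coprime (S-pair reduces to zero automatically) or the overlap forces a variable $x_{i+}x_{i-}$ or $y_{ij\alpha\gamma}y_{ij\beta\delta}$ with $\alpha\neq\beta$ to appear, at which point the extra relations $x_{i+}x_{i-}-z^2$ (resp.\ the $y\cdot y$ family) are exactly what is needed to rewrite and complete the reduction. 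The remaining cases are S-pairs among the three extra families, and between an extra family and some $\varphi_\varepsilon(g)$; these are finite, very structured, and each reduces via the extra relations and the $\varphi_\varepsilon(\Gc)$ together—this is the bookkeeping-heavy part but each individual check is short.

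The main obstacle is precisely this last bookkeeping: showing closure under S-pairs between the ``gluing'' binomials (the $x\cdot y$, $y\cdot y$, and $x_{i+}x_{i-}-z^2$ families) and the orthant-Gröbner-basis elements $\varphi_\varepsilon(\Gc)$, since these mix the combinatorics of $G$ with the sign data and there is no single clean symmetry to appeal to—one genuinely has to enumerate the overlap patterns (a leading term $x_{i\alpha}y_{ij\beta\gamma}$ can overlap a leading term of $\varphi_\varepsilon(g)$ only in the $x_{i\alpha}$ or $y_{ij\beta\gamma}$ factor, and in each subcase the difference telescopes into lower terms). A convenient way to shortcut part of this is to observe that the monomials \emph{not} divisible by any leading term of $\Gc'$ are in bijection with lattice points of unimodular simplices covering $\Bc_G$—i.e.\ to compare the Hilbert function of $R_G/\mathrm{in}_{<_R}(I_{\Bc_G})$ with $\mathrm{vol}(\Bc_G)=2^d\mathrm{Vol}(P_{\widetilde G})$ from Proposition~\ref{keylemma}(a)—so that once $\Gc'\subseteq I_{\Bc_G}$ and the leading terms are identified, a dimension/degree count forces $\Gc'$ to be a Gröbner basis without exhaustively verifying every S-pair. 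Either route finishes the proof; the final sentence (squarefree resp.\ quadratic descends to $\mathrm{in}_{<_R}(I_{\Bc_G})$) is then immediate, since all the extra leading monomials $x_{i\alpha}y_{ij\beta\gamma}$, $y_{ij\alpha\gamma}y_{ik\beta\delta}$, $x_{i+}x_{i-}$ are squarefree and quadratic, and $\varphi_\varepsilon$ preserves degree and squarefree-ness of monomials.
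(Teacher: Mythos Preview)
Your plan via Buchberger's criterion is a different and considerably more laborious route than the paper's. The paper avoids S-pairs entirely by invoking the binomial criterion for toric ideals (\cite[Theorem~3.11]{binomialideals}): to show $\Gc'$ is a Gr\"obner basis it suffices to check that no nonzero irreducible homogeneous binomial $f=u-v\in I_{\Bc_G}$ has both $u$ and $v$ lying outside the monomial ideal generated by the leading terms of $\Gc'$. The crux is then a single structural observation: the extra leading monomials $x_{i+}x_{i-}$, $x_{i\alpha}y_{ij\beta\gamma}$, and $y_{ij\alpha\gamma}y_{ik\beta\delta}$ (with $\alpha\neq\beta$) are precisely the quadratic witnesses of a ``sign conflict'' at some vertex $i$, so any monomial avoiding all of them carries a consistent sign $\alpha_i$ at every index it involves. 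Comparing $t$-exponents in $\pi(u)=\pi(v)$ forces $u$ and $v$ to share the \emph{same} sign pattern, whence $f=\varphi_\varepsilon(f')$ for a single $\varepsilon$ and some $f'\in I_{P_{\widetilde G}}$; the hypothesis on $\Gc$ together with the compatibility conditions (i)--(ii) on $<_R$ then yields the contradiction. This replaces your entire case split---including the ``bookkeeping-heavy'' mixed S-pairs you flag as the main obstacle---by one remark about what the extra leading terms encode.

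Your Hilbert-function shortcut, on the other hand, has a genuine gap. Matching only $\Vol(\Bc_G)$ (the leading coefficient of the Hilbert polynomial) does not force $\langle\,{\rm in}_{<_R}(g):g\in\Gc'\,\rangle={\rm in}_{<_R}(I_{\Bc_G})$; you would need equality of Hilbert functions in every degree. Moreover, the lemma is stated for arbitrary $G$, and by Theorem~\ref{OCCforBG} the polytope $\Bc_G$ need not be IDP or admit a unimodular covering, so the Hilbert function of $K[\Bc_G]$ need not agree with the Ehrhart function of $\Bc_G$, and the lattice-point picture you invoke is not available in general.
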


\begin{proof}
It is easy to see that $\Gc'$ is a subset of $I_{\Bc_G}$.
Assume that $\Gc'$ is not a Gr\"obner basis of $I_{\Bc_G}$
 with respect to $<_R$.
Let ${\rm in}(\Gc') = \left< {\rm in}_{<_R} (g)  : g \in \Gc' \right>$.
By \cite[Theorem~3.11]{binomialideals}, there exists a non-zero irreducible homogeneous binomial $f = u-v \in I_{\Bc_G}$
such that neither $u$ nor $v$ belongs to ${\rm in}(\Gc')$.
Since both $u$ and $v$ are divided by none of $x_{i+} x_{i-},$
$x_{i \alpha} y_{i j \beta \gamma}$, $y_{i j \alpha \gamma} y_{i k \beta \delta} $ ($\alpha \neq \beta$), 
they are of the form
$$
u= \prod_{i \in I} ({x_{i \alpha_i}})^{u_i} \prod_{\{j,k\} \in E_1} (y_{j k \alpha_j \alpha_k})^{u_{jk}},
\ \ \ 
v= \prod_{i \in I'} (x_{i \alpha_i'})^{v_i} \prod_{\{j,k\} \in E_2} (y_{j k \alpha_j' \alpha_k'})^{v_{jk}},
$$
where $I, I' \subset [d]$, $E_1, E_2 \subset E(G)$ and $0 < u_i, u_{jk}, v_i, v_{jk} \in \ZZ$.
Since $f$ belongs to $I_{\Bc_G}$, the exponent of $t_\ell$ in $\pi(u)$ and $\pi(v)$ are the same.
Hence, one of $x_{\ell \alpha_\ell}$ and $y_{\ell m \alpha_\ell \alpha_m}$ appears in $u$ if and only if 
one of $x_{\ell \alpha_\ell'}$ and $y_{\ell n \alpha_\ell' \alpha_n'}$ appears in $v$
with $\alpha_\ell = \alpha_\ell'$.
Let $\varepsilon$ be a vector in $\{-1,1\}^d$  such that 
the sign of the $i$-th component of $\varepsilon$ is $\alpha_i$
if one of $x_{i \alpha_i}$ and $y_{i j \alpha_i \alpha_j}$ appears in $u$.
Then $f$ belongs to the ideal $\varphi_\varepsilon(I_{P_{\widetilde{G}}})$.
Let $f' \in I_{P_{\widetilde{G}}}$ be a binomial such that $\varphi_\varepsilon  (f') = f$.
Since $\Gc$ is a Gr\"obner basis of $I_{P_{\widetilde{G}}}$, 
there exists a binomial $g \in \Gc$ whose initial monomial ${\rm in}_{<_R} (g)$ divides 
the one of the monomials in $f'$. 
By the definition of $<_R$, 
we have ${\rm in}_{<_R} (\varphi_\varepsilon(g) ) =\varphi_\varepsilon ({\rm in}_{<_R} (g) )$.
Hence ${\rm in}_{<_R} (\varphi_\varepsilon(g) )$ divides one of the monomials in $f = \varphi_\varepsilon  (f') $.
This is a contradiction.
\end{proof}

Using this Gr\"obner basis with respect to a reverse lexicographic order,
we verify which $\Bc_G$ is a reflexive polytope.
Namely, we prove Theorem \ref{reflexiveBG}.

\begin{proof}[Proof of Theorem \ref{reflexiveBG}]
The implication (i) $\Rightarrow$ (ii) is trivial.

(ii) $\Rightarrow$ (iii):
Suppose that $G$ is not bipartite.
Let $G_1, \ldots, G_s$ be the connected components of $G$
and let $d_i$ be the number of vertices of $G_i$.
In particular, we have $d = \sum_{i=1}^s d_i$.
Since $G$ is not bipartite, we may assume that $G_1$ is not bipartite.
Let $\wb = \sum_{i=1}^s \wb_i$, where 
$\wb_i = \sum_{k=1}^\ell \eb_{p_k} \in \RR^d$
if $G_i$ is a non-bipartite graph on the vertex set $\{p_1, \ldots, p_\ell\}$,
and $\wb_i = \sum_{k=1}^\ell 2 \eb_{p_k} \in \RR^d$
if $G_i$ is a bipartite graph whose vertices are divided into 
two independent sets $\{p_1, \ldots, p_\ell\}$ and $\{q_1, \ldots, q_m\}$.
It then follows that 
$
\Hc = 
\{
\xb \in \RR^d : \wb \cdot \xb = 2
\}
$
is a supporting hyperplane of $\Bc_G$
and the corresponding face $\Fc_\wb = \Bc_G \cap \Hc$  is the convex hull of
$H=\bigcup_{i=1}^s H_i$, where 
$H_i=
\{{\bf e}_u + {\bf e}_v : \{u,v\} \in E(G_i)\}
$
if $G_i$ is not bipartite, and
$H_i=
\{{\bf e}_u + {\bf e}_v : \{u,v\} \in E(G_i)\} \cup \{\eb_{p_1}, \ldots, \eb_{p_\ell}\}
\cup \{{\bf e}_{p_k} - {\bf e}_v : \{p_k,v\} \in E(G_i)\}
$
if $G_i$ is a bipartite graph with $\wb_i = \sum_{k=1}^\ell 2 \eb_{p_k} \in \RR^d$.
We will show that $\Fc_\wb$ is a facet of $\Bc_G$.
The convex hull  of $\{{\bf e}_u + {\bf e}_v : \{u,v\} \in E(G_i)\}$
 is the edge polytope $P_{G_i}$ of $G_i$ and 
it is known \cite[Proposition~1.3]{OHnormal} that
$$
\dim P_{G_i}
=
\left\{
\begin{array}{cc}
d_i-1 & \mbox{if } G_i \mbox{ is not bipartite,}\\
d_i-2  & \mbox{otherwise.}
\end{array}
\right.
$$
If $G_i$ is not bipartite, then the dimension of $\conv(H_i) = P_{G_i}$ is $d_i-1$.
If $G_i$ is bipartite, then 
$d_i -2  = \dim P_{G_i} < \dim \conv(H_i)$ since
$P_{G_i} \subset \conv(H_i)$
and a hyperplane 
$
\Hc = 
\{
(x_1,\ldots, x_d) \in \RR^d : x_1 + \cdots + x_d= 2
\} 
$ satisfies $\eb_{p_1} \notin \Hc \supset P_{G_i}$.
Hence the dimension of the face $\Fc_\wb$ is at least $s -1 + \sum_{i=1}^s (d_i-1) = d-1$,
i.e., $\Fc_\wb$ is a facet of $\Bc_G$.
Since $G_1$ is not bipartite, we have $\frac{1}{2} \cdot \wb \notin \ZZ^d$.
Thus $\Bc_G$ is not reflexive.

(iii) $\Rightarrow$ (i):
Suppose that $G$ is bipartite.
Let $<_S$ and $<_R$ be any reverse lexicographic orders satisfying the condition in 
 Lemma~\ref{GBlemma}.
It is known \cite[Theorem~5.24]{binomialideals} that any triangulation of the edge polytope of a bipartite graph is unimodular.
By \cite[Corollary~8.9]{sturmfels1996}, the initial ideal of the toric ideal of $P_{\widehat{G}}$ with respect to $<_S$ is squarefree.
Thanks to Lemmas~\ref{HMOS} and \ref{GBlemma}, we have the desired conclusion.
\end{proof}

We now give a theorem on quadratic Gr\"obner bases of $I_{\Bc_G}$ when $G$ is bipartite.
This theorem implies that Theorem \ref{thm:flag}.
The same result is known for edge polytopes (\cite{Koszulbipartite}).

\begin{Theorem}
\label{kb}
Let $G$ be a bipartite graph.
Then the following conditions are equivalent{\rm :}
\begin{itemize}
\item[(i)]
The toric ideal $I_{\Bc_G}$ of $\Bc_G$ has a squarefree quadratic 
initial ideal\\
{\rm (}i.e., $\Bc_G$ has a flag regular unimodular triangulation{\rm );}

\item[(ii)]
The toric ring $K[\Bc_G]$ of $\Bc_G$ is a Koszul algebra{\rm ;}

\item[(iii)]
The toric ideal $I_{\Bc_G}$ of $\Bc_G$ is generated by quadratic binomials{\rm ;}

\item[(iv)]
Any cycle of $G$ of length $\ge 6$ has a chord
{\rm (}``chordal bipartite graph''{\rm )}.
\end{itemize}

\end{Theorem}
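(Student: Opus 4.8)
The plan is to prove the cycle of implications $(i)\Rightarrow(ii)\Rightarrow(iii)\Rightarrow(iv)\Rightarrow(i)$. The implications $(i)\Rightarrow(ii)$ and $(ii)\Rightarrow(iii)$ are general facts: if $\mathrm{in}_{<}(I_{\Bc_G})$ is generated by quadratic monomials, then the initial algebra $K[\xb]/\mathrm{in}_{<}(I_{\Bc_G})$ is a quadratic monomial algebra, hence Koszul, hence $K[\Bc_G]$ is Koszul by Gr\"obner deformation; and a Koszul algebra is defined by quadratic relations. So the content lies in $(iii)\Rightarrow(iv)$ and $(iv)\Rightarrow(i)$, and in both I would reduce to the edge-polytope analogue \cite{Koszulbipartite} by means of Proposition~\ref{keylemma}.

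For $(iii)\Rightarrow(iv)$ I would argue contrapositively. Suppose $G$ is bipartite but not chordal bipartite, so that by \cite{Koszulbipartite} the toric ideal $I_{P_G}$ of the edge polytope is not generated by quadrics. By Proposition~\ref{keylemma}(b), $P_G$ is a face of $\Bc_G$; and for a face $F$ of a lattice polytope $\Pc$, setting to zero the variables attached to the lattice points of $\Pc$ outside $F$ exhibits $K[F]$ simultaneously as a quotient of $K[\Pc]$ and, via the obvious inclusion, as a subalgebra, i.e.\ as a graded algebra retract of $K[\Pc]$ (the equality ``image of $I_\Pc$ after killing the non-face variables equals $I_F$'' being exactly what it means for $F$ to be a face). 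Since the number of minimal generators of each degree of the defining ideal cannot increase under passing to a graded algebra retract, $I_{\Bc_G}$ cannot be generated by quadrics, so $(iii)$ fails.

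For $(iv)\Rightarrow(i)$ the strategy is to refine the proof of $(iii)\Rightarrow(i)$ of Theorem~\ref{reflexiveBG} so as to control degrees. The first step is to verify that if $G$ is chordal bipartite then so is $\widehat{G}$: a cycle $C$ of $\widehat{G}$ with $|C|\ge 6$ either lies in $G$, where it has a chord by hypothesis, or it meets $\{d+1,d+2\}$, and then, using $N_{\widehat{G}}(d+1)=V_1\cup\{d+2\}$ and $N_{\widehat{G}}(d+2)=V_2\cup\{d+1\}$ together with the bipartition of $G$, one locates an interior vertex of $C$ in $V_1$ (resp.\ $V_2$) joined by a chord to $d+1$ (resp.\ $d+2$), the only boundary case being $|C|=6$ with $d+1,d+2$ nonconsecutive on $C$, where $\{d+1,d+2\}$ is itself a chord. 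Granting this, \cite{Koszulbipartite} furnishes a reverse lexicographic order for which $\mathrm{in}(I_{P_{\widehat{G}}})$ is squarefree \emph{and} quadratic; transporting along the unimodular equivalence $P_{\widehat{G}}\cong P_{\widetilde{G}}$ of Proposition~\ref{keylemma}(a) yields a squarefree quadratic initial ideal of $I_{P_{\widetilde{G}}}$. I would then choose the underlying variable order to have the shape required of $<_S$ in Lemma~\ref{GBlemma} ($z$ smallest, then the $x_{i+}$, then the $y_{ij++}$) and to extend to a $<_R$ satisfying conditions (i) and (ii) there; Lemma~\ref{GBlemma} then delivers a squarefree quadratic $\mathrm{in}_{<_R}(I_{\Bc_G})$, which is $(i)$.

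The step I expect to be the main obstacle is this last one: one must revisit the construction of the quadratic squarefree Gr\"obner basis of $I_{P_{\widehat{G}}}$ in \cite{Koszulbipartite} and check that the elimination/vertex ordering underlying it can be arranged so that, after identifying $P_{\widehat{G}}$ with $P_{\widetilde{G}}$, the edge $\{d+1,d+2\}$ comes first, the edges incident to $d+1$ or $d+2$ precede the edges of $G$, and a reverse lexicographic order on $R_G$ compatible with all of the maps $\varphi_\varepsilon$ exists. Morally $d+1$ and $d+2$ behave like ``simplicial'' vertices of $\widehat{G}$, which should make such an ordering available, but confirming it is the point that requires genuine care. (The graded-retract statement used in $(iii)\Rightarrow(iv)$ is standard for toric rings of faces and can simply be cited.)
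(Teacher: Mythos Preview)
Your proposal is correct and follows essentially the same route as the paper: $(i)\Rightarrow(ii)\Rightarrow(iii)$ are general, $(iii)\Rightarrow(iv)$ goes through the face $P_G\subset\Bc_G$ (the paper phrases this as $K[P_G]$ being a combinatorial pure subring of $K[\Bc_G]$, which is exactly your retract argument), and $(iv)\Rightarrow(i)$ reduces via Lemma~\ref{GBlemma} to a squarefree quadratic initial ideal for $I_{P_{\widehat{G}}}$. The one point worth noting is how the paper dispatches the obstacle you flag: rather than proving $\widehat{G}$ chordal bipartite and then separately arranging the variable order, it works directly with the biadjacency matrix $A'$ of $\widehat{G}$, obtained from that of $G$ by appending an all-ones row and column in the \emph{last} positions; since a forbidden $\Gamma$-submatrix has its $0$ in the bottom-right entry, the all-ones last row and column can never participate, so $A'$ is $\Gamma$-free as soon as $A$ is, and the resulting diagonal term order from \cite{Koszulbipartite} then automatically satisfies $z<\{x_{i+}\}<\{y_{k\ell++}\}$.
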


\begin{proof}
Implications (i) $\Rightarrow$ (ii) $\Rightarrow$ (iii) hold in general.

 (iii) $\Rightarrow$ (iv):
Suppose that $G$ has a cycle of length $\ge 6$ without chords.
By the theorem in \cite{Koszulbipartite}, the toric ideal  of $P_G$ is not generated by 
quadratic binomials.
Since the edge polytope $P_G$ is a face of $\Bc_G$,
the toric ring $K[P_G]$ is 
a combinatorial pure subring \cite{cpure} of $K[\Bc_G]$.
Hence $I_{\Bc_G}$ is not generated by quadratic binomials.

(iv) $\Rightarrow$ (i):
Suppose that any cycle of $G$ of length $\ge 6$ has a chord.
By Lemma~\ref{GBlemma}, it is enough to show that the initial ideal of 
$I_{P_{\widehat{G}}}$ is squarefree and quadratic with respect to a reverse lexicographic order $<_S$
such that  $z < \{x_{i+}\} < \{y_{k \ell ++}\}$.
Let $A = (a_{ij})$ be the incidence matrix of $G$ whose rows are indexed by $V_1$
and whose columns are indexed by $V_2$.
Then the incidence matrix of $\widehat{G}$ is 
$$A' = 
\left(
\begin{array}{cccc}
 & & & 1\\
 & A & & \vdots\\
 & & & 1\\
1 & \cdots & 1 & 1
\end{array}
\right)
$$
By the same argument as in the proof of the theorem in \cite{Koszulbipartite}, we may assume that 
$A'$ contains no submatrices 
$\left(
\begin{array}{cc}  
1 & 1\\
1 & 0
\end{array}
\right)
$ if we permute the rows and columns of $A$ in $A'$. 
Each quadratic binomial in $I_{P_{\widehat{G}}}$ corresponds to a submatrix 
$\left(
\begin{array}{cc}  
1 & 1\\
1 & 1
\end{array}
\right)
$ of $A'$.
The proof of the theorem in \cite{Koszulbipartite} guarantees that 
the initial ideal is squarefree and quadratic if the initial monomial of each quadratic binomial corresponds 
to $\left(
\begin{array}{cc}  
  & 1\\
1 &  
\end{array}
\right)$.
It is easy to see that there exists a such reverse lexicographic order
which satisfies $z < \{x_{i+}\} < \{y_{k \ell ++}\}$.
\end{proof}


\section{$\gamma$-positivity and real-rootedness of 
the $h^*$-polynomial of ${\mathcal B}_{G}$}
\label{sec:gamma}
In this section, we study the $h^*$-polynomial of $\Bc_G$ for a graph $G$.
First, we recall what $h^*$-polynomials are.
Let $\Pc \subset \RR^d$ be a lattice polytope of dimension $d$.
Given a positive integer $n$, we define
$$L_{\Pc}(n)=|n \Pc \cap \ZZ^d|.$$
The study on $L_{\Pc}(n)$ originated in Ehrhart \cite{Ehrhart} who proved that $L_{\Pc}(n)$ is a polynomial in $n$ of degree $d$ with the constant term $1$.
We say that $L_{\Pc}(n)$ is the \textit{Ehrhart polynomial} of $\Pc$.
The generating function of the lattice point enumerator, i.e., the formal power series
$$\text{Ehr}_\Pc(x)=1+\sum\limits_{k=1}^{\infty}L_{\Pc}(k)x^k$$
is called the \textit{Ehrhart series} of $\Pc$.
It is well known that it can be expressed as a rational function of the form
$$\text{Ehr}_\Pc(x)=\frac{h^*(\Pc,x)}{(1-x)^{d+1}}.$$
 The polynomial $h^*(\Pc,x)$ is a polynomial in $x$ of degree at most $d$ with nonnegative integer coefficients (\cite{Stanleynonnegative}) and it
is called
the \textit{$h^*$-polynomial} (or the \textit{$\delta$-polynomial}) of $\Pc$. 
Moreover, one has $\Vol(\Pc)=h^*(\Pc,1)$. 
Refer the reader to \cite{BeckRobins} for the detailed information about Ehrhart polynomials and $h^*$-polynomials.

Thanks to Proposition \ref{keylemma} (a), we give a formula for $h^*$-polynomial of $\Bc_G$ in terms of that of edge polytopes of some graphs.
By the following formula, we can calculate the $h^*$-polynomial of $\Bc_G$
if we can calculate each $h^*(P_{\widetilde{H}}, x)$.

\begin{Proposition}
\label{hpolyformula}
	Let $G$ be a graph on $[d]$.
	Then the $h^*$-polynomial of $\Bc_G$ satisfies
	\begin{eqnarray}
	\label{fundamentalF}
	h^*(\Bc_G, x)
	&=&
	\sum_{j=0}^d \ \ 
	2^j (x-1)^{d-j}  
	\sum_{H \in S_j(G)} h^*(P_{\widetilde{H}}, x),
	\end{eqnarray}
	where $S_j(G)$ denote the set of all induced subgraph of $G$
	with $j$ vertices.
\end{Proposition}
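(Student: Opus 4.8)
The plan is to rephrase $(\ref{fundamentalF})$ as an identity of Ehrhart polynomials and then prove it by an inclusion--exclusion over the coordinate orthants. For an induced subgraph $H\in S_j(G)$ on a vertex set $S$ with $|S|=j$, Proposition~\ref{keylemma}(a) identifies $P_{\widetilde H}$, up to unimodular equivalence, with $\Bc_{H}\cap\Oc_{(1,\dots,1)}$ (taken inside $\RR^{S}$); the latter is $j$-dimensional since it contains $\mathbf{0}$ and all $\eb_i$ $(i\in S)$, so $\dim P_{\widetilde H}=j$ and $\mathrm{Ehr}_{P_{\widetilde H}}(x)=h^*(P_{\widetilde H},x)/(1-x)^{j+1}$. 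Since $(x-1)^{d-j}=(-1)^{d-j}(1-x)^{d-j}$, dividing $(\ref{fundamentalF})$ by $(1-x)^{d+1}$ turns its $j$-th summand into $2^j(-1)^{d-j}\sum_{H\in S_j(G)}\mathrm{Ehr}_{P_{\widetilde H}}(x)$. Under the bijection $H\leftrightarrow S=V(H)$, the formula $(\ref{fundamentalF})$ is therefore equivalent to
$$
\mathrm{Ehr}_{\Bc_G}(x)=\sum_{S\subseteq[d]}(-1)^{d-|S|}2^{|S|}\,\mathrm{Ehr}_{P_{\widetilde{G[S]}}}(x),
$$
i.e. to the claim $L_{\Bc_G}(n)=\sum_{S\subseteq[d]}(-1)^{d-|S|}2^{|S|}L_{P_{\widetilde{G[S]}}}(n)$ for every $n\ge0$.

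Next I would set up the orthant count for a fixed $S$. Restricting $B(G)$ to the coordinate subspace $\{x_i=0:i\notin S\}$ yields exactly $B(G[S])$, and, using the orthant description of Proposition~\ref{keylemma}(a) (a convex combination of elements of $B(G)$ that lands in that subspace can only use vectors supported on $S$, since within a single $\Oc_\varepsilon$ the off-$S$ coordinates are all of one sign), one gets $\Bc_G\cap\{x_i=0:i\notin S\}=\Bc_{G[S]}$; hence $n\Bc_{G[S]}\cap\ZZ^S=\{\xb\in n\Bc_G\cap\ZZ^d:\operatorname{supp}\xb\subseteq S\}$. Because $B(G[S])$ is stable under coordinate sign changes, all $2^{|S|}$ closed orthants of $\RR^S$ contain the same number of lattice points of $n\Bc_{G[S]}$, namely $L_{\Bc_{G[S]}\cap\Oc_{(1,\dots,1)}}(n)=L_{P_{\widetilde{G[S]}}}(n)$ by Proposition~\ref{keylemma}(a). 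Counting incidences between the lattice points of $n\Bc_{G[S]}$ and these orthants --- a point $\xb$ lies in exactly $2^{|S|-|\operatorname{supp}\xb|}$ of them --- gives
$$
2^{|S|}\,L_{P_{\widetilde{G[S]}}}(n)=\sum_{\xb\in n\Bc_{G[S]}\cap\ZZ^S}2^{\,|S|-|\operatorname{supp}\xb|},\qquad\text{hence}\qquad L_{P_{\widetilde{G[S]}}}(n)=\sum_{\xb\in n\Bc_{G[S]}\cap\ZZ^S}2^{-|\operatorname{supp}\xb|}.
$$

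Finally I would substitute this into the right-hand side of the target identity, use the description of $n\Bc_{G[S]}\cap\ZZ^S$ above to let the inner sum range over $\xb\in n\Bc_G\cap\ZZ^d$ with $\operatorname{supp}\xb\subseteq S$, and swap the order of summation. A point $\xb$ with $\operatorname{supp}\xb=T$ then contributes $2^{-|T|}\sum_{S\supseteq T}(-1)^{d-|S|}2^{|S|}$; writing $S=T\sqcup S'$ with $S'\subseteq[d]\setminus T$, the inner sum is $2^{|T|}\sum_{k=0}^{d-|T|}\binom{d-|T|}{k}(-1)^{(d-|T|)-k}2^{k}=2^{|T|}(2-1)^{d-|T|}=2^{|T|}$, so each $\xb$ contributes exactly $1$ and the total is $|n\Bc_G\cap\ZZ^d|=L_{\Bc_G}(n)$, as wanted. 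The whole argument is bookkeeping; the only spots requiring care are the degree count $\dim P_{\widetilde H}=j$ that makes the powers of $(1-x)$ cancel cleanly in the first step, and the middle step's verification that the per-orthant lattice-point count is independent of the chosen orthant and equals $L_{P_{\widetilde{G[S]}}}(n)$.
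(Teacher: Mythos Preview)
Your proof is correct and follows essentially the same route as the paper: both arguments convert the $h^*$-identity to the Ehrhart-polynomial identity $L_{\Bc_G}(n)=\sum_{S\subseteq[d]}(-1)^{d-|S|}2^{|S|}L_{P_{\widetilde{G[S]}}}(n)$ and prove it by inclusion--exclusion over the orthant decomposition of Proposition~\ref{keylemma}(a), using that intersections of orthant pieces are (up to unimodular equivalence) the polytopes $P_{\widetilde{G[S]}}$ for induced subgraphs $G[S]$. The only difference is presentational: the paper describes the codimension-one intersections $(\Bc_G\cap\Oc_\varepsilon)\cap(\Bc_G\cap\Oc_{\varepsilon'})\simeq\Bc_{G'}\cap\Oc_{\varepsilon''}$ and then asserts the inclusion--exclusion formula directly, whereas you unfold the same count point-by-point, tracking how many orthants contain a given lattice point and closing with the binomial identity $\sum_{S\supseteq T}(-1)^{d-|S|}2^{|S|}=2^{|T|}$.
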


\begin{proof}
	By Proposition \ref{keylemma} (a), $\Bc_G$ is divided into $2^d$ lattice polytopes of the form
	$\Bc_G \cap {\mathcal O}_\varepsilon$.
	Each $\Bc_G \cap {\mathcal O}_\varepsilon$ is 
	unimodularly equivalent to $P_{\widetilde{G}}$.
	In addition, the intersection of 
	$\Bc_G \cap {\mathcal O}_\varepsilon$ and $\Bc_G \cap {\mathcal O}_{\varepsilon'}$
	is of dimension $d-1$ if and only if $\varepsilon - \varepsilon' \in \{\pm 2 \eb_1, \ldots, \pm 2 \eb_d\}$.
	If $\varepsilon - \varepsilon' = 2 \eb_k$, then 
	$
	(\Bc_G \cap {\mathcal O}_\varepsilon) \cap (\Bc_G \cap {\mathcal O}_{\varepsilon'})
	= 
	\Bc_G \cap {\mathcal O}_\varepsilon \cap {\mathcal O}_{\varepsilon'}
	\simeq
	\Bc_{G'} \cap {\mathcal O}_{\varepsilon''},
	$
	where $G'$ is the induced subgraph of $G$ obtained by deleting the vertex $k$,
	and $\varepsilon''$ is obtained by deleting the $k$-th component of $\varepsilon$.
	Hence the Ehrhart polynomial $L_{\Bc_G}(n)$ satisfies the following:
	$$
	L_{\Bc_G}(n) = \sum_{j=0}^d \ \ 
	2^j (-1)^{d-j}  
	\sum_{H \in S_j(G)} L_{P_{\widetilde{H}}}(n).
	$$
	Thus the Ehrhart series satisfies
	\begin{eqnarray*}
		\frac{h^*(\Bc_G, x)}{(1-x)^{d+1}}
		&=&
		\sum_{j=0}^d \ \ 
		2^j (-1)^{d-j}  
		\sum_{H \in S_j(G)} 
		\frac{h^*(P_{\widetilde{H}}, x)}{(1-x)^{j+1}},
	\end{eqnarray*}
	as desired.
\end{proof}

Let  $f= \sum_{i=0}^{d}a_i x^i$ be a polynomial with real coefficients and $a_d \neq 0$.
We now focus on the following properties.
\begin{itemize}
	\item[(RR)] We say that $f$ is {\em real-rooted} if all its roots are real.  
	\item[(LC)] We say that $f$ is {\em log-concave} if $a_i^2 \geq a_{i-1}a_{i+1}$ for all $i$.
	\item[(UN)] We say that $f$ is {\em unimodal} if $a_0 \leq a_1 \leq \cdots \leq a_k \geq \cdots \geq a_d$ for some $k$.
\end{itemize}  
If all its coefficients are nonnegative, then these properties satisfy the implications
\[
{\rm(RR)} \Rightarrow {\rm(LC)} \Rightarrow {\rm(UN)}.
\]
On the other hand, the polynomial $f$ is said to be {\em palindromic} if $f(x)=x^df(x^{-1})$.
It is {\em $\gamma$-positive} if there are $\gamma_0,\gamma_1,\ldots,\gamma_{\lfloor d/2\rfloor} \geq 0$ such that $f(x)=\sum_{i \geq 0}
\gamma_i \  x^i (1+x)^{d-2i}$.
The polynomial $\sum_{i \geq 0}\gamma_i \ x^i$ is called {\em $\gamma$-polynomial of $f$}. We can see that a $\gamma$-positive polynomial is real-rooted if and only if its $\gamma$-polynomial had only real roots (\cite[Observation 4.2]{EulerianNumbers}).

By the following proposition, we are interested in connected bipartite graphs.

\begin{Proposition}
 Let $G$ be a bipartite graph and $G_1,\ldots, G_s$ the connected components of $G$.
Then the $h^*$-polynomial of ${\mathcal B}_{G}$ is palindromic, unimodal
and 
	\[
	h^*(\Bc_G,x)=h^*(\Bc_{G_1},x) \cdots h^*(\Bc_{G_s},x).
	\]
\end{Proposition}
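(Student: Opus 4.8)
The plan is to prove the multiplicativity $h^*(\Bc_G,x)=\prod_{i=1}^s h^*(\Bc_{G_i},x)$ first, and then derive palindromicity and unimodality from it together with the results of Section~\ref{sec:ref}. The multiplicativity reflects the fact that $\Bc_G$ is a free sum (direct sum) of the polytopes $\Bc_{G_i}$: if $G$ has connected components $G_1,\dots,G_s$ with vertex sets partitioning $[d]$ into blocks of sizes $d_1,\dots,d_s$, then every generator in $B(G)$ — namely $\mathbf 0$, $\pm\eb_i$, and $\pm\eb_i\pm\eb_j$ for $\{i,j\}\in E(G)$ — lives entirely in the coordinate subspace $\RR^{d_i}$ spanned by the vertices of a single component (or is the origin). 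Hence $\Bc_G$ is exactly the free sum $\Bc_{G_1}\oplus\cdots\oplus\Bc_{G_s}$, where each $\Bc_{G_i}$ is embedded in its own coordinate subspace, and each contains the origin in its relative interior.

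The key step is then to invoke the behaviour of $h^*$-polynomials under free sums: if $\Pc,\Qc$ are lattice polytopes each containing the origin in their interior, sitting in complementary coordinate subspaces, then $h^*(\Pc\oplus\Qc,x)=h^*(\Pc,x)\,h^*(\Qc,x)$. I would either cite the standard reference for this (e.g.\ the work of Braun on Ehrhart series of free sums, or Nill's results on reflexive polytopes), or give a short direct argument: for a free sum of polytopes containing the origin, $n(\Pc\oplus\Qc)\cap\ZZ^{d}=\bigl(n\Pc\cap\ZZ^{d_1}\bigr)\times\bigl(n\Qc\cap\ZZ^{d_2}\bigr)$, so the Ehrhart series factor, and since $\dim(\Pc\oplus\Qc)=\dim\Pc+\dim\Qc$ the denominators $(1-x)^{d+1}$ match up after noting that multiplying the two Ehrhart series introduces exactly one extra factor of $(1-x)$, which is absorbed because $\mathrm{Ehr}_{\Pc}(x)\mathrm{Ehr}_{\Qc}(x)=\frac{h^*(\Pc,x)h^*(\Qc,x)}{(1-x)^{d_1+1}(1-x)^{d_2+1}}$ and one checks directly via the lattice point count above that $\mathrm{Ehr}_{\Pc\oplus\Qc}(x)(1-x)=\mathrm{Ehr}_\Pc(x)\mathrm{Ehr}_\Qc(x)$. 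Iterating over the $s$ components gives the product formula.

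For palindromicity and unimodality: by Theorem~\ref{reflexiveBG}, since each $G_i$ is bipartite (a connected component of a bipartite graph is bipartite), each $\Bc_{G_i}$ is reflexive with a regular unimodular triangulation, and hence $h^*(\Bc_{G_i},x)$ is palindromic and unimodal by the result of Bruns--R\"omer cited in the introduction (\cite{BR}). A product of palindromic polynomials is palindromic: if $h^*(\Bc_{G_i},x)=x^{d_i}h^*(\Bc_{G_i},x^{-1})$ for each $i$, then $h^*(\Bc_G,x)=\prod_i h^*(\Bc_{G_i},x)=\prod_i x^{d_i}h^*(\Bc_{G_i},x^{-1})=x^{d}h^*(\Bc_G,x^{-1})$. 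For unimodality one uses the standard fact that a product of palindromic unimodal polynomials with nonnegative coefficients is again palindromic and unimodal (this is classical; one can also note that $\Bc_G$ itself is reflexive with a regular unimodular triangulation by Theorem~\ref{reflexiveBG}, so unimodality of $h^*(\Bc_G,x)$ follows directly from \cite{BR} without appeal to the product structure).

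The main obstacle I anticipate is being careful about the free-sum identity at the level of Ehrhart series — specifically tracking the single extra $(1-x)$ factor that appears when multiplying two Ehrhart series of polytopes whose dimensions add, and confirming that it is cancelled by the lattice-point product decomposition $n(\Pc\oplus\Qc)\cap\ZZ^d=(n\Pc\cap\ZZ^{d_1})\times(n\Qc\cap\ZZ^{d_2})$, which holds precisely because both polytopes contain the origin. (If $\Pc$ did \emph{not} contain the origin this decomposition would fail.) Everything else is routine once the free-sum structure of $\Bc_G$ is observed.
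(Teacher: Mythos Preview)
Your overall strategy matches the paper's: observe that $\Bc_G$ is the free sum $\Bc_{G_1}\oplus\cdots\oplus\Bc_{G_s}$, invoke Braun \cite{Braun} for the multiplicativity of $h^*$-polynomials, and use Theorem~\ref{reflexiveBG} together with \cite{BR} (and Hibi's characterization \cite{hibi}) for palindromicity and unimodality. If you simply cite Braun, the proof is correct and essentially identical to the paper's.

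However, your alternative ``short direct argument'' for the free-sum identity is wrong. The claimed decomposition
\[
n(\Pc\oplus\Qc)\cap\ZZ^{d}=\bigl(n\Pc\cap\ZZ^{d_1}\bigr)\times\bigl(n\Qc\cap\ZZ^{d_2}\bigr)
\]
is false: the right-hand side describes the lattice points of the \emph{Cartesian product} $n\Pc\times n\Qc$, not of the free sum, which is much smaller. For instance, with $\Pc=\Qc=[-1,1]$ the free sum is the diamond with vertices $(\pm1,0),(0,\pm1)$; at $n=1$ it has $5$ lattice points, not $9$. Consequently your Ehrhart-series identity is also off (the correct statement, when it holds, is $\mathrm{Ehr}_{\Pc\oplus\Qc}(x)=(1-x)\,\mathrm{Ehr}_\Pc(x)\,\mathrm{Ehr}_\Qc(x)$, not the version you wrote). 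More importantly, multiplicativity of $h^*$ under free sums does \emph{not} follow merely from both summands containing the origin; Braun's theorem uses reflexivity of one summand in an essential way. Here that hypothesis is available, since each $G_i$ is bipartite and hence each $\Bc_{G_i}$ is reflexive by Theorem~\ref{reflexiveBG}, so citing \cite{Braun} is both necessary and sufficient --- but you should drop the direct argument.
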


\begin{proof}
It is known \cite{hibi} that the $h^*$-polynomial of a lattice polytope $P$ with the interior lattice point ${\bf 0}$  is palindromic 
if and only if $P$ is reflexive.
Moreover, if a reflexive polytope $P$ has a unimodular triangulation, then 
the $h^*$-polynomial of $P$ is unimodal (see \cite{BR}).
It is easy to see that, 
$\Bc_G$ is the free sum of $\Bc_{G_1}, \ldots, \Bc_{G_s}$.
Thus we have a desired conclusion by Theorem~\ref{reflexiveBG}
and \cite[Theorem 1]{Braun}.
\end{proof}

In the rest of the present paper, we discuss the $\gamma$-positivity and the real-rootedness of the $h^*$-polynomial of $\Bc_G$
when $G$ is a bipartite graph.
The edge polytope $P_G$ of a bipartite graph $G$ is called a {\em root polytope}
of $G$, and it is shown  \cite{KalPos} that the $h^*$-polynomial of $P_G$ coincides with the interior polynomial $I_G(x)$ of a hypergraph induced by $G$.
First, we discuss interior polynomials introduced by K\'{a}lm\'{a}n \cite{interior}
and developed in many papers.

A {\em hypergraph} is a pair $\Hc = (V, E)$, where $E=\{e_1,\ldots,e_n\}$ is a finite multiset of non-empty subsets of $V=\{v_1,\ldots,v_m\}$. 
Elements of $V$ are called vertices and the elements of $E$ are the  hyperedges.
Then we can associate $\Hc$ to a bipartite graph ${\rm Bip} \Hc$
 with a bipartition $V \cup E$ such that $\{v_i, e_j\}$ is an edge of ${\rm Bip} \Hc$
if $v_i \in e_j$.
Assume that ${\rm Bip} \Hc$ is connected.
A {\em hypertree} in $\Hc$ is a function ${\bf f}: E \rightarrow \ZZ_{\ge 0}$
such that there exists a spanning tree $\Gamma$ of ${\rm Bip} \Hc$ 
whose vertices have degree ${\bf f} (e) +1$ at each $e \in E$.
Then we say that $\Gamma$ induce ${\bf f}$.
Let ${\rm HT}(\Hc)$ denote the set of all hypertrees in $\Hc$.
A hyperedge $e_j \in E$ is said to be {\em internally active}
with respect to the hypertree ${\bf f}$ if it is not possible to 
decrease ${\bf f}(e_j)$ by $1$ and increase ${\bf f}(e_{j'})$ ($j' < j$) by $1$
so that another hypertree results.
We call a hyperedge {\em internally inactive} with respect to a hypertree
 if it is not internally active and denote the number of such hyperedges 
of ${\bf f}$ by $\overline{\iota} ({\bf f}) $.
Then the {\em interior polynomial} of $\Hc$
is the generating function 
$I_\Hc (x) = \sum_{{\bf f} \in {\rm HT} (\Hc)} x^{ \overline{\iota} ({\bf f}) }$.
It is known \cite[Proposition~6.1]{interior} that $\deg I_\Hc (x) \le \min\{|V|,|E|\} - 1$.
If $G = {\rm Bip} \Hc$, then we set $I_G (x) = I_\Hc (x)$.
In \cite{KalPos}  K\'{a}lm\'{a}n and Postnikov proved the following.

\begin{Proposition}[{\cite[Theorems 1.1 and 3.10]{KalPos}}]
\label{prop:interior}
Let $G$ be a connected bipartite graph. Then we have
	$$
I_G(x)  =  h^*(P_G , x).
$$
\end{Proposition}
Interior polynomials of disconnected bipartite graphs are defined by Kato \cite{KATO} and the same assertion of Proposition~\ref{prop:interior} holds for all bipartite graphs.
Note that if $G$ is a bipartite graph, then the bipartite graph $\widehat{G}$ 
arising from $G$ is connected.
Hence we can use this formula to study the equation (\ref{fundamentalF}) in Proposition~\ref{hpolyformula}.

A {\em $k$-matching} of $G$ is a set of $k$ pairwise non-adjacent edges of $G$.
Let 
$$
M(G, k)=
\left\{\{v_{i_1},\ldots , v_{i_k},  e_{j_1},\ldots,e_{j_k}\}
:
\begin{array}{c}
\mbox{there exists a } k\mbox{-matching of }
G\\
 \mbox{ whose vertex set is }
\{v_{i_1},\ldots , v_{i_k}, e_{j_1},\ldots,e_{j_k}\}
\end{array}
\right\}.
$$
For $k=0$, we set $M(G,0) = \{\emptyset\}$.
Using the theory of generalized permutohedra \cite{OhTransversal, Postnikov}, 
we have the following important fact on interior polynomials:

\begin{Proposition}
\label{kmatching_interior}
Let $G$ be a bipartite graph.
Then we have
\begin{equation}
\label{InteriorMatching}
I_{\widehat{G}} (x) = 
\sum_{k \ge 0}
| M(G, k)| \ x^k.
\end{equation}
\end{Proposition}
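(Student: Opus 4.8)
The plan is to reduce the statement, via the identification of the edge polytope $P_{\widehat{G}}$ with a closed-orthant slice of $\Bc_G$, to an explicit Ehrhart computation. Combining the theorem of K\'alm\'an and Postnikov \cite{KalPos}, $I_{\widehat{G}}(x)=h^*(P_{\widehat{G}},x)$, with Proposition~\ref{keylemma}(a) and the explicit description of $B(G)\cap\Oc_{(1,\ldots,1)}$ obtained in its proof, one sees that $P_{\widehat{G}}$ is unimodularly equivalent to $\Qc:=\Bc_G\cap\Oc_{(1,\ldots,1)}$, which equals $\conv(\{\mathbf{0}\}\cup\{\eb_i:i\in[d]\}\cup\{\eb_i+\eb_j:\{i,j\}\in E(G)\})\subset\RR^d$. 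Hence the assertion is equivalent to $h^*(\Qc,x)=\sum_{k\ge 0}|M(G,k)|\,x^k$.

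To compute the right-hand side, observe that since $G$ is bipartite, by the proof of Theorem~\ref{reflexiveBG} the toric ideal of $P_{\widehat{G}}$ has a squarefree initial ideal, so $\Qc$ has a regular unimodular triangulation and in particular is IDP; moreover its only lattice points are the generators listed above. Therefore $L_{\Qc}(n)=\#\{\xb\in\ZZ^d_{\ge 0}:\mu_G(\xb)\le n\}$, where $\mu_G(\xb)$ is the least number of the vectors $\eb_i$, $\eb_i+\eb_j$ ($\{i,j\}\in E(G)$) whose sum is $\xb$. A short optimization shows $\mu_G(\xb)=\sum_i x_i-\nu_G(\xb)$, with $\nu_G(\xb)=\max\{\sum_e b_e:b\in\ZZ^{E(G)}_{\ge 0},\ \sum_{e\ni i}b_e\le x_i\text{ for all }i\}$ the largest $b$-matching of $G$ dominated by $\xb=(x_1,\ldots,x_d)$; since $G$ is bipartite this integer program agrees with its linear relaxation, so LP duality (a weighted K\"onig theorem) gives $\nu_G(\xb)=\min\{\sum_{i\in C}x_i:C\text{ a vertex cover of }G\}$, equivalently $\mu_G(\xb)=\max\{\sum_{i\in I}x_i:I\text{ an independent set of }G\}=:\alpha_G(\xb)$. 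Summing over $n$ and using $\sum_{n\ge 0}L_{\Qc}(n)x^n=h^*(\Qc,x)/(1-x)^{d+1}$, one obtains $h^*(\Qc,x)=(1-x)^d\sum_{\xb\in\ZZ^d_{\ge 0}}x^{\alpha_G(\xb)}$.

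The remaining identity $(1-x)^d\sum_{\xb\in\ZZ^d_{\ge 0}}x^{\alpha_G(\xb)}=\sum_{k\ge 0}|M(G,k)|\,x^k$ is the main obstacle. One may attempt a hands-on reorganization of the sum: peel off from $\xb$ a canonically chosen maximum $b$-matching $\beta$ of $G$, write $\xb=\partial\beta+\yb$ with $(\partial\beta)_i=\sum_{e\ni i}\beta_e$ and $\yb\in\ZZ^d_{\ge 0}$, so that $\alpha_G(\xb)=|\beta|+\sum_i y_i$, and try to split the sum into the factor $\tfrac1{(1-x)^d}=\sum_{\yb}x^{\sum_i y_i}$ and a sum over the combinatorial types of $\beta$ that should collapse onto the matchable $2k$-subsets recorded by $M(G,k)$. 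A cleaner route uses the theory of generalized permutohedra and transversal matroids \cite{OhTransversal, Postnikov}: compute the hypertrees of $\widehat{G}$ and the internal-inactivity statistic $\overline{\iota}$ in terms of the transversal matroid of the hypergraph of $\widehat{G}$, and observe that adjoining a universal vertex to each color class of $G$ makes this data correspond bijectively to $\bigsqcup_{k}M(G,k)$ with $\overline{\iota}$ becoming $k$. In either approach the delicate point is the same: $M(G,k)$ remembers only the vertex set of a $k$-matching, so many $b$-matchings (equivalently many hypertrees) project to one class and the factor $(1-x)^d$ must precisely absorb the residual freedom in $\xb$; arranging this cancellation so that one lands on matchable subsets counted exactly once is the heart of the proof.
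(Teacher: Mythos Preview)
Your reduction up to the identity $(1-x)^d\sum_{\xb\in\ZZ_{\ge 0}^d}x^{\alpha_G(\xb)}=\sum_{k\ge 0}|M(G,k)|\,x^k$ is sound and in fact rather elegant: the IDP argument for $\Qc$, the interpretation $\mu_G(\xb)=\sum_i x_i-\nu_G(\xb)$, and the LP duality step (total unimodularity in the bipartite case) all check out. The problem is that you stop precisely at the point where the actual combinatorics begins. Your first suggestion, peeling off a canonical maximum $b$-matching $\beta$ and hoping the residual $\yb$ ranges freely over $\ZZ^d_{\ge 0}$, does not work as stated: the pairs $(\beta,\yb)$ that arise are exactly those for which $\beta$ remains a maximum $b$-matching of $\partial\beta+\yb$, a nontrivial constraint that destroys the product structure you need for the factor $(1-x)^{-d}$ to appear. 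Your second suggestion is a hand-wave toward the paper's own argument; invoking ``the theory of generalized permutohedra and transversal matroids'' without saying which lattice-point count is being matched to which set of hypertrees is not a proof. As written, the proposal proves nothing beyond a reformulation.

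For comparison, the paper avoids the Ehrhart detour entirely and does not invoke K\'alm\'an--Postnikov here. It works directly from the definition of $I_{\widehat{G}}$ in terms of hypertrees: after a normalization of the inducing spanning tree (forcing all edges $\{v_1,e_j\}$ to be present), one sees that $e_j$ is internally inactive if and only if ${\bf f}(e_j)>0$, so $\overline{\iota}({\bf f})$ is simply the size of the support of ${\bf f}$ on $\{e_2,\ldots,e_n\}$. The count of hypertrees with prescribed support $\{e_{j_1},\ldots,e_{j_k}\}$ is then identified, via Oh's lattice-point description of $P_{{\mathcal M}}$ for a transversal matroid \cite{OhTransversal} and Postnikov's description of lattice points of generalized permutohedra \cite{Postnikov}, with the number of $k$-subsets of $V$ matchable to $\{e_{j_1},\ldots,e_{j_k}\}$. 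That is the step doing the real work, and it is exactly the step your proposal leaves open.
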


\begin{proof}
Let $V \cup E$ denote a bipartition of $G$,
where $V = \{v_2,\ldots, v_m\}$ and $E=\{e_2,\ldots,e_n\}$ with $d = m+n-2$.
Then $\widehat{G}$ is a connected bipartite graph
with a bipartition $V' \cup E'$ with $V' = \{v_1\} \cup V$
and $E' = \{e_1\} \cup E$.
Recall that $\{v_i, e_j\}$ is an edge of $\widehat{G}$ 
if and only if either $(i-1)(j-1) = 0$ or $\{v_i, e_j\}$ is an edge of $G$.
Let ${\rm HT}(\widehat{G})$ be the set of all hypertrees in
the hypergraph associated with $\widehat{G}$.
Given a hypertree ${\mathbf f} \in {\rm HT}(\widehat{G})$,
let $\Gamma$ be a spanning tree that induces ${\bf f}$.
From \cite[Lemma~3.3]{interior}, we may assume that 
$\{v_1, e_j\}$ is an edge of $\Gamma$ for all $1 \le j \le n$.
Note that the degree of each $v_i$ $(2 \le i \le m)$ is $1$.

By definition, $e_1$ is always internally active.
We show that, $e_j$ ($j \ge 2$) is internally active
if and only if ${\bf f} (e_j) =0$.
By definition, if ${\bf f} (e_j) =0$, then $e_j$ is internally active.
Suppose  ${\bf f} (e_j) > 0$.
Then there exists $i \ge 2$ such that 
$\{v_i, e_j\}$ is an edge of $\Gamma$.
Let ${\bf f}' \in {\rm HT}(\widehat{G})$ be a hypertree
induced by a spanning tree
obtained by replacing $\{v_i, e_j\}$ with $\{v_i, e_1\}$ in $\Gamma$.
Then we have ${\bf f}' (e_j) = {\bf f} (e_j) -1$,
${\bf f}' (e_1) = {\bf f} (e_1) +1$ and ${\bf f}' (e_k) = {\bf f} (e_k)$
for all $1 < k \neq j$. 
Hence $e_j$ is not internally active.
Thus $\overline{\iota} ({\bf f})$ is the number of
$e_j$ ($j \ge 2$) such that there exists an edge $\{v_i, e_j\}$ 
of $\Gamma$
for some $i \ge 2$.

In order to prove the equation (\ref{InteriorMatching}),
it is enough to show that,
for fixed hyperedges $e_{j_1},\ldots,e_{j_k}$ with 
$2 \le j_1 < \cdots < j_k \le n$,
the cardinality of
$$
\Sc_{j_1,\ldots,j_k}=
\{{\bf f} \in {\rm HT}(\widehat{G}) : 
 e_{j_1},\ldots,e_{j_k}
\mbox{ are not internally active and } \overline{\iota} ({\bf f}) = k
\}
$$
is equal to the cardinality of
$$
{\mathcal M}_{j_1,\ldots,j_k}=
\left\{\{v_{i_1},\ldots , v_{i_k}\}
:
\begin{array}{c}
\mbox{there exists a } k\mbox{-matching of }
G\\
 \mbox{ whose vertex set is }
\{v_{i_1},\ldots , v_{i_k}, e_{j_1},\ldots,e_{j_k}\}
\end{array}
\right\}.
$$
Let $G_{j_1,\ldots,j_k}$ be the induced subgraph of $G$ on the vertex set
$V \cup \{ e_{j_1},\ldots,e_{j_k}\}$.
If $e_{j_\ell}$ is an isolated vertex in $G_{j_1,\ldots,j_k}$, then 
both $\Sc_{j_1,\ldots,j_k}$ and ${\mathcal M}_{j_1,\ldots,j_k}$ are empty sets.
If $v_i$ is an isolated vertex in $G_{j_1,\ldots,j_k}$, then 
there is no relations between $v_i$ and two sets,
and hence we can ignore $v_i$.
Thus we may assume that $G_{j_1,\ldots,j_k}$ has no isolated vertices.

It is known that ${\mathcal M}_{j_1,\ldots,j_k}$ is the set of bases of a transversal matroid
associated with $G_{j_1,\ldots,j_k}$.
See, e.g., \cite[Section~1.6]{MT}.
For $i=2,\ldots, m$, let 
$$I_i = \{0\} \cup \{\ell : \{v_i, e_{j_\ell}\} \mbox{ is an edge of } G_{j_1,\ldots,j_k} \}
\subset \{0,1,\ldots,k\}
.$$
Oh \cite{OhTransversal} defines a lattice polytope $P_{{\mathcal M}_{j_1,\ldots,j_k}}$ to be the generalized permutohedron \cite{Postnikov}
of the induced subgraph of $\widehat{G}$ on the vertex set $V \cup \{e_1, e_{j_1},\ldots,e_{j_k}\}$, i.e., $P_{{\mathcal M}_{j_1,\ldots,j_k}}$ is the Minkowski sum 
$
\Delta_{I_2} + \cdots + \Delta_{I_m},
$
where  $\Delta_{I} = {\rm conv}(\{\eb_j : j \in I\}) \subset \RR^{k+1}$
and $\eb_0,\eb_1,\ldots,\eb_k$ are unit coordinate vectors in $\RR^{k+1}$.
By \cite[Lemma 22 and Proposition 26]{OhTransversal}, the cardinality of ${\mathcal M}_{j_1,\ldots,j_k}$ is equal to the number of the lattice point 
$(x_0,x_1,\ldots,x_k) \in P_{{\mathcal M}_{j_1,\ldots,j_k}} \cap \ZZ^{k+1}$
with $x_1, x_2 ,\ldots, x_k \ge 1$.
In addition, by \cite[Proposition~14.12]{Postnikov},
any lattice point 
$(x_0,x_1,\ldots,x_k) \in P_{{\mathcal M}_{j_1,\ldots,j_k}} \cap \ZZ^{k+1}$
is of the form
$\eb_{i_2} + \cdots + \eb_{i_m}$, where $i_\ell \in I_\ell$ for $2 \le \ell \le m$.
By a natural correspondence $(x_0,x_1,\ldots,x_k) \in P_{{\mathcal M}_{j_1,\ldots,j_k}} \cap \ZZ^{k+1}$ with $x_1, x_2 ,\ldots, x_k \ge 1$
and $({\bf  f}(e_1),{\bf  f}(e_{j_1}), \ldots, {\bf  f}(e_{j_k}))$ with 
${\bf f} \in \Sc_{j_1,\ldots,j_k}$,
it follows that  the number of the lattice point 
$(x_0,x_1,\ldots,x_k) \in P_{{\mathcal M}_{j_1,\ldots,j_k}} \cap \ZZ^{k+1}$
with $x_1, x_2 ,\ldots, x_k \ge 1$
is equal to the cardinality of $\Sc_{j_1,\ldots,j_k}$, as desired.
\end{proof}

Now, we show that the $h^*$-polynomial of $\Bc_G$ is $\gamma$-positive if $G$ is a bipartite graph.
In fact, we prove Theorem \ref{hpolymain}.
\begin{proof}[Proof of Theorem \ref{hpolymain}]
By Propositions \ref{hpolyformula} and \ref{kmatching_interior},
the $h^*$-polynomial of $\Bc_{G}$ is
\begin{eqnarray*}
h^*(\Bc_G, x)
&=&
\sum_{j=0}^d \ \ 
2^j (x-1)^{d-j}  
	\sum_{H \in S_j(G)} I_{\widehat{H}} (x)\\
&=&
\sum_{j=0}^d 
2^j (x-1)^{d-j}  
	\sum_{H \in S_j(G)} 
\sum_{k \ge 0}
|M(H, k)| \ x^k.
\end{eqnarray*}
Note that, for each 
$\{v_{i_1}, \ldots, v_{i_k},e_{j_1},\ldots,e_{j_k}\}
\in M(G, k)$ there exist $\binom{d-2k}{j -2k}$ induced subgraphs
$H \in S_j(G)$ such that $\{v_{i_1}, \ldots, v_{i_k},e_{j_1},\ldots,e_{j_k}\} \in M(H, k)$ for $j = 2k, 2k+1, \ldots, d$.
Thus we have
\begin{eqnarray*}
h^*(\Bc_G, x)
&=&
\sum_{k \ge 0}
\sum_{j=2k}^d 
2^j (x-1)^{d-j}  
| M(G, k)| 
\binom{d-2k}{j-2k}
x^k\\
&=&
\sum_{k \ge 0}
|M(G, k)| \ 
2^{2 k} x^k (2+(x-1))^{d-2 k}\\
&=&
(x+1)^d \sum_{k \ge 0}
| M(G, k) | \ 
\left(\frac{4x}{(x+1)^2}\right)^k\\
&=&  (x+1)^d I_{\widehat{G}} \left(  \frac{4x}{(x+1)^2} \right),
\end{eqnarray*}
as desired.
\end{proof}

By Proposition~\ref{kmatching_interior},
it follows that, if $G$ is a forest, then
$I_{\widehat{G}} (x) $ coincides with the {\em matching generating polynomial} of $G$.

\begin{Proposition}
\label{prop:forest}
Let $G$ be a forest.
Then we have $$I_{\widehat{G}} (x) = \sum_{k \ge 0} m_k(G) \ x^k,$$
where $m_k(G)$ is the number of $k$-matchings in $G$.
In particular, $I_{\widehat{G}} (x)$ is real-rooted.
\end{Proposition}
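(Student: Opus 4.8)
The plan is to read the proposition directly off Proposition~\ref{kmatching_interior}, which already gives $I_{\widehat{G}}(x)=\sum_{k\ge 0}|M(G,k)|\,x^k$. Hence the whole content is the combinatorial identity $|M(G,k)|=m_k(G)$ for a forest $G$, plus the classical real-rootedness of its matching generating polynomial. For the identity, recall that by the very definition of $M(G,k)$ the assignment sending a $k$-matching of $G$ to its vertex set $\{v_{i_1},\ldots,v_{i_k},e_{j_1},\ldots,e_{j_k}\}$ is a surjection onto $M(G,k)$; so it suffices to show this map is injective when $G$ is a forest.

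First I would reduce injectivity to a uniqueness statement: if two $k$-matchings $M_1,M_2$ of $G$ share the vertex set $S$ with $|S|=2k$, then both are perfect matchings of the induced subgraph $G[S]$, which is again a forest. So it is enough to know that a forest has at most one perfect matching. This is elementary: in a forest $F$ admitting a perfect matching, every leaf must be matched to its unique neighbour, so one may strip off such a leaf-edge pair, note that the rest is still a forest with a perfect matching, and iterate; the matching is thereby forced. Therefore $M_1=M_2$, which gives $|M(G,k)|=m_k(G)$ and the stated formula $I_{\widehat{G}}(x)=\sum_{k\ge 0}m_k(G)\,x^k$.

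For the ``in particular'', the polynomial $M_G(x):=\sum_{k\ge 0}m_k(G)\,x^k$ is a standard transform of the matching polynomial $\mu(G,t)=\sum_{k}(-1)^k m_k(G)\,t^{|V(G)|-2k}$, namely $\mu(G,t)=t^{|V(G)|}M_G(-t^{-2})$. Since the exponents $|V(G)|-2k$ all have the same parity, $\mu(G,-t)=(-1)^{|V(G)|}\mu(G,t)$, so once one knows $\mu(G,t)$ is real-rooted one can write $\mu(G,t)=t^{\delta}\prod_j(t^2-\beta_j^2)$ with $\delta\ge 0$ and $\beta_j\in\RR$; substituting $x=-t^{-2}$ then turns this into $M_G(x)=\prod_j(1+\beta_j^2 x)$, whose roots $-\beta_j^{-2}$ are real (necessarily negative, as they must be since all $m_k(G)\ge 0$). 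Real-rootedness of $\mu(G,t)$ holds for every graph by the Heilmann--Lieb theorem; for a forest it is even more direct, since $\mu(G,t)$ then coincides with the characteristic polynomial of the adjacency matrix of $G$, a real symmetric matrix.

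The single nontrivial point is the injectivity step, and inside it the uniqueness of a perfect matching in a forest; everything afterwards is bookkeeping and the real-rootedness is a citation. That acyclicity is genuinely needed is already visible for a $4$-cycle, which has two perfect matchings on the same vertex set, so the statement really is special to forests.
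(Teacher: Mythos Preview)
Your proof is correct and follows the same approach as the paper: invoke Proposition~\ref{kmatching_interior}, show that distinct $k$-matchings of a forest cannot share a vertex set, and cite the real-rootedness of the matching generating polynomial. The only cosmetic difference is in the injectivity step---the paper observes that the symmetric difference $(M_1\cup M_2)\setminus(M_1\cap M_2)$ of two such matchings is a $2$-regular subgraph and hence contains a cycle, whereas you argue by leaf-stripping that a forest has at most one perfect matching; both are standard and equally short.
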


\begin{proof}
Let $M_1$ and $M_2$ be $k$-matchings of $G$.
Suppose that $M_1$ and $M_2$ have the same vertex set
$\{v_{i_1}, \ldots, v_{i_k},e_{j_1},\ldots,e_{j_k}\}$.
If $M=(M_1 \cup M_2) \setminus (M_1 \cap M_2)$ is not empty,
then $M$ corresponds to a subgraph of $G$ such that the degree of each vertex is $2$.
Hence $M$ has at least one cycle.
This contradicts that $G$ is a forest.
Hence we have $M_1 = M_2$.
Thus
$
m_k(G) 
$
 is the cardinality of 
$ 
 M(G, k)
$.
In general, it is known that $\sum_{k \ge 0} m_k(G) \ x^k$ is
real-rooted for any graph $G$.
See, e.g., \cite{Fact_on_matching1, Fact_on_matching2}.
\end{proof}

Next we will show that, if a bipartite graph $G$ is a ``permutation graph''
associated with a poset $P$, 
then the interior polynomial $I_{\widehat{G}} (x) $ coincides with the $P$-Eulerian polynomial $W(P)(x)$.
A  {\em permutation graph} is a graph on $[d]$ with edge set
$$\{ \{i,j\} : L_i \mbox{ and } L_j \mbox{ intersect each other} \},$$
where there are $d$ points $1,2,\ldots,d$ on two parallel lines ${\mathcal L}_1$ and ${\mathcal L}_2$ in the plane, and
the straight lines $L_i$ connect $i$ on ${\mathcal L}_1$
and $i$ on ${\mathcal L}_2$.
If $G$ is a bipartite graph with a bipartition $V_1 \cup V_2$, the following conditions are equivalent:
\begin{itemize}
\item[(i)]
$G$ is a permutation graph;
\item[(ii)]
The complement of $G$ is a comparability graph of a poset;
\item[(iii)]
There exist orderings $<_1$ on $V_1$ and $<_2$ on $V_2$ such that
$$
i, i' \in V_1, i<_1 i', \ \  j, j' \in V_2,  j <_2 j', \ \ 
 \{i,j\}, \{i', j'\} \in E(G)
\Longrightarrow
\{i,j'\}, \{i', j\} \in E(G);
$$
\item[(iv)]
For any three vertices,
there exists a pair of them such that there exists no path
containing the two vertices that avoids the neighborhood of
the remaining vertex.
\end{itemize}
See \cite[p.~93]{graphclasses} for details.
On the other hand,
let $P$ be a naturally labeled poset $P$ on $[d]$.
Then the {\em order polynomial} $\Omega(P,m)$ of $P$
is defined for $0 < m \in \ZZ$ to be the number of order-preserving maps
$\sigma: P \rightarrow [m]$.
It is known that 
$$
\sum_{m \ge 0} \Omega(P,m+1) x^m
=
\frac{ \sum_{\pi \in {\mathcal L} (P)} x^{d(\pi)}}{ (1-x)^{d+1} },
$$
where ${\mathcal L} (P)$ is the set of linear extensions of $P$
and $d(\pi)$ is the number of descent of $\pi$.
The {\em $P$-Eulerian polynomial} $W(P)(x)$ is defined by
$$
W(P)(x)
=
 \sum_{\pi \in {\mathcal L} (P)} x^{d(\pi)}
.$$
See, e.g., \cite{Ebook} for details.
We now see a relation between the interior polynomial of a bipartite permutation graph and the $P$-Eulerian polynomial of a finite poset.

\begin{Proposition}
	\label{prop:permutation}
	Let $G$ be a bipartite permutation graph and let $P$
be a poset whose comparability graph is the complement of 
$G$.
Then we have $$I_{\widehat{G}} (x) = W(P)(x).$$
\end{Proposition}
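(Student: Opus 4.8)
The plan is to identify both $I_{\widehat{G}}(x)$ and $W(P)(x)$ with the generating function counting matchings of $G$ by size, and the latter with the descent generating function over linear extensions of $P$. By Proposition~\ref{kmatching_interior} we already know $I_{\widehat{G}}(x) = \sum_{k\ge 0} |M(G,k)| \, x^k$, so the real content is to show that, when $G$ is a bipartite permutation graph with bipartition $V_1 \cup V_2$ and $P$ is the poset whose comparability graph is the complement of $G$, one has $|M(G,k)| = |\{\pi \in \mathcal{L}(P) : d(\pi) = k\}|$. First I would make the poset structure explicit: since the complement of $G$ is the comparability graph of $P$, two vertices are comparable in $P$ precisely when they are \emph{non}-adjacent in $G$; in particular each of $V_1$ and $V_2$ is a chain in $P$ (being an independent set of $G$), so $P$ is the union of two chains, and an element of $V_1$ is incomparable to an element of $V_2$ exactly when the corresponding edge lies in $E(G)$. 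Using the orderings $<_1$ on $V_1$ and $<_2$ on $V_2$ from condition~(iii) of the permutation-graph characterization, the "staircase" implication there says precisely that $P$ is a (naturally labeled) poset that is the union of the two chains $V_1$, $V_2$ with a consistent set of cross-incomparabilities.

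Next I would set up the bijection between $M(G,k)$ and linear extensions of $P$ with $k$ descents. A linear extension $\pi$ of $P$ is a shuffle of the chain $V_1$ with the chain $V_2$ subject to the incomparability constraints; a descent of $\pi$ occurs at a position where the label drops, which — because within each chain labels increase — can only happen at a switch from an element of one chain to an element of the other. The natural candidate map sends a $k$-matching $M = \{\{i_1,j_1\},\dots,\{i_k,j_k\}\}$ of $G$ (with the $i$'s in $V_1$, the $j$'s in $V_2$) to the linear extension in which exactly the pairs matched by $M$ are "adjacent-swapped," i.e.\ the $k$ matched edges record the $k$ places where $\pi$ crosses from $V_1$ to $V_2$ (or vice versa) creating a descent. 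One then checks: (1) such a shuffle is genuinely a linear extension of $P$ — this is where condition~(iii) is used, guaranteeing that the cross-edges of a matching are "nested/compatible" enough that no relation of $P$ is violated; (2) the number of descents equals $k$; and (3) the map is a bijection, with inverse reading off from a linear extension the set of (chain-boundary) descent positions and the pair of elements straddling each. Finally, combining the bijection with Proposition~\ref{kmatching_interior} and the definition $W(P)(x) = \sum_{\pi \in \mathcal{L}(P)} x^{d(\pi)}$ yields $I_{\widehat{G}}(x) = \sum_k |M(G,k)|\,x^k = \sum_{\pi\in\mathcal{L}(P)} x^{d(\pi)} = W(P)(x)$.

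The main obstacle I anticipate is step~(1)/(3): carefully describing the bijection between $k$-matchings of a bipartite permutation graph and $k$-descent linear extensions of the union-of-two-chains poset, and verifying it is well defined in both directions. The subtlety is that not every shuffle of the two chains is a linear extension (the incomparabilities, i.e.\ the non-edges \emph{within} the complement, impose that certain $V_1$-elements must precede certain $V_2$-elements), and dually not every set of $k$ "crossing pairs" forms a matching of $G$; the permutation-graph axiom~(iii) is exactly what reconciles these two conditions, so the heart of the argument is translating that axiom into the statement that "admissible descent sets of $\mathcal{L}(P)$" $=$ "edge sets of matchings of $G$." An alternative, possibly cleaner route would be to avoid an explicit bijection and instead match Ehrhart-type generating functions: by Proposition~\ref{keylemma}(a) and the K\'alm\'an--Postnikov theorem $I_{\widehat{G}}(x) = h^*(P_{\widehat{G}},x)$, while $W(P)(x)$ is the $h^*$-polynomial of the order polytope $\mathcal{O}(P)$ (equivalently the numerator of $\sum_m \Omega(P,m+1)x^m$); one could then exhibit a unimodular equivalence, or at least an Ehrhart-equivalence, between $P_{\widehat{G}}$ and $\mathcal{O}(P)$ for permutation $G$, which may be the way the authors proceed.
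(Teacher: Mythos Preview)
Your primary approach has a genuine error: you have conflated $|M(G,k)|$ with the number $m_k(G)$ of $k$-matchings of $G$. By definition $M(G,k)$ is the set of \emph{vertex sets} that support some $k$-matching, not the set of $k$-matchings themselves, and for bipartite permutation graphs these generally differ. Take $G = K_{2,2}$, which is a bipartite permutation graph (its complement is the comparability graph of the poset $P$ consisting of two disjoint $2$-chains). Here $m_2(G)=2$ but $|M(G,2)|=1$, while $W(P)(x) = 1+4x+x^2$ has exactly one linear extension with two descents. So a bijection from $k$-matchings to linear extensions with $k$ descents cannot exist; the map you describe, which takes a set of \emph{edges} as input, is aiming at the wrong source set. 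A corrected bijective argument would have to match \emph{vertex sets} of $k$-matchings (equivalently, $2k$-subsets on which the induced bipartite subgraph has a perfect matching) with $k$-descent linear extensions, and your description of the map does not do this.

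Your alternative route at the end is essentially the paper's proof, with one small adjustment that makes it immediate. Rather than the order polytope $\mathcal{O}(P)$, the paper uses the \emph{chain} polytope $\mathcal{C}_P$: the vertices of $\mathcal{C}_P$ are the characteristic vectors of antichains of $P$, and since antichains of $P$ are exactly cliques of $G$ (because the comparability graph of $P$ is $\overline{G}$), and cliques in a bipartite graph are $\emptyset$, single vertices, or edges, one gets literally $\mathcal{C}_P = \Bc_G \cap \mathcal{O}_{(1,\dots,1)}$. Then Proposition~\ref{keylemma}(a) gives a unimodular equivalence with $P_{\widehat{G}}$, and the known identity $h^*(\mathcal{C}_P,x)=W(P)(x)$ together with $I_{\widehat{G}}(x)=h^*(P_{\widehat{G}},x)$ finishes the proof in one line. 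Your suggestion via $\mathcal{O}(P)$ would also work (Stanley's transfer map gives $h^*(\mathcal{O}(P),x)=h^*(\mathcal{C}_P,x)$), but the chain polytope is the one that is \emph{equal} to $\Bc_G \cap \mathcal{O}_{(1,\dots,1)}$, so no extra equivalence needs to be exhibited.
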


\begin{proof}
In this case, $\Bc_G \cap {\mathcal O}_{(1,\ldots,1)}$ is
the chain polytope ${\mathcal C}_{P}$ of $P$.
It is known that the $h^*$-polynomial of ${\mathcal C}_{P}$
is the $P$-Eulerian polynomial $W(P)(x)$.
See \cite{twoposetpolytopes, Ebook} for details.
Thus we have $I_{\widehat{G}} (x) = h^*(P_{\widehat{G}}, x)= W(P)(x),$
as desired.
\end{proof}

It was conjectured by
Neggers--Stanley that $W(P)(x)$ is real-rooted.
However this is false in general.
The first counterexample was constructed in \cite{Bra} (not naturally labeled posets).
Counterexamples of naturally labeled posets were given in \cite{Stembridge}.
Counterexamples in these two papers are {\em narrow} posets,
i.e., elements of posets are partitioned into two chains.
It is easy to see that $P$ is narrow poset if and only if
the comparability graph of $P$ is the complement of a bipartite graph.
Since Stembridge found many counterexamples which are naturally labeled narrow posets,
there are many bipartite permutation graphs $G$ such that $h^*(\Bc_G ,x)$ are not real-rooted.
We give one of them as follows.

\begin{Example}
	\label{ex:nonreal}
Let $P$ be a naturally labeled poset in Figure~\ref{Sposet} given in
\cite{Stembridge}.
\begin{figure}[h]
\begin{center}
\includegraphics[height=6cm]{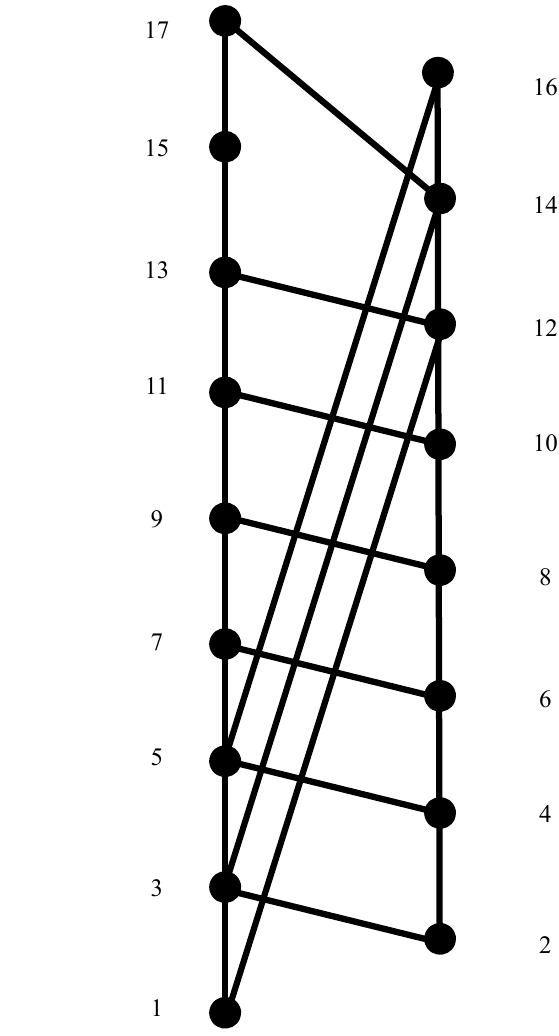} 
\caption{A counterexample to Neggers--Stanley conjecture \cite{Stembridge}}
\label{Sposet}
\end{center}
\end{figure}
Then $$W(P)(x)=
3 x^8+86 x^7+658 x^6+1946 x^5+2534 x^4+1420 x^3+336 x^2+32 x+1$$
has a conjugate pair of zeros near $-1.85884 \pm 0.149768 i$ as explained in \cite{Stembridge}.
Let $G$ be the complement of the comparability graph of $P$.
Then $G$ is a bipartite graph with $17$ vertices and $32$ edges.
The $h^*$-polynomial of $\Bc_G$ is 
\begin{eqnarray*}
& & h^*(\Bc_G ,x)= (x+1)^{17} W(P)\left(\frac{4x}{(x+1)^2}\right)\\
&=&
x^{17}+145 x^{16}+7432 x^{15}+174888 x^{14}+2128332 x^{13}+14547884 x^{12}+59233240 x^{11}\\
& & +148792184 x^{10}+234916470 x^9+234916470 x^8+148792184 x^7+59233240 x^6\\
& & +14547884 x^5+2128332 x^4+174888 x^3+7432 x^2+145 x+1
\end{eqnarray*}
and 
has conjugate pairs of zeros near
$-3.88091 \pm 0.18448 i$ and $-0.257091\pm0.0122209 i$.
(We used {\tt Mathematica} to compute approximate values.)
On the other hand, $h^*(\Bc_G ,x)$ is log-concave.
\end{Example}

By the following proposition, it turns out that
this example is a counterexample to ``Real Root Conjecture'' that has been already disproved by Gal \cite{Gal}.

\begin{Proposition}
\label{sphere}
	Let $G$ be a bipartite permutation graph.
	Then $h^*(\Bc_G, x)$ coincides with the $h$-polynomial of  a flag complex
	that is a triangulation of a sphere.
\end{Proposition}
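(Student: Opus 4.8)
The plan is to realise the required sphere as the link of the origin in a well-chosen flag regular unimodular triangulation of $\Bc_G$, so that $\Bc_G$ becomes literally a cone over that sphere; then $h^*(\Bc_G,x)$ will come out as the $h$-polynomial of the sphere via the standard Ehrhart/Stanley--Reisner dictionary. First I would record the combinatorial input. A bipartite permutation graph has no chordless cycle of length $\ge 6$ (a chordless $2k$-cycle with $k\ge 3$ is not a permutation graph), hence $G$ is a chordal bipartite graph; see \cite{graphclasses}. Therefore, by Theorem~\ref{reflexiveBG}, $\Bc_G$ is reflexive (so $\mathbf 0$ is its unique interior lattice point), and by Theorem~\ref{kb} the toric ideal $I_{\Bc_G}$ has a squarefree quadratic initial ideal. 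I would fix the concrete one: take the order $<_S$ constructed in the proof of Theorem~\ref{kb} and let $<_R$ be the reverse lexicographic order on $R_G$ that Lemma~\ref{GBlemma} produces from it, so that $z$ is the smallest variable and $\operatorname{in}_{<_R}(I_{\Bc_G})$ is squarefree and quadratic. Let $\Delta$ be the corresponding initial complex: it is a flag regular unimodular triangulation of $\Bc_G$, and its vertex $z$ corresponds to the lattice point $\mathbf 0$ since $\pi(z)=s$.

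The heart of the argument is that $z$ is a cone point of $\Delta$. Because $z$ is the smallest variable of the reverse lexicographic order $<_R$, it cannot divide the initial monomial of any primitive binomial of $I_{\Bc_G}$; equivalently, $z$ divides none of the generators of $\operatorname{in}_{<_R}(I_{\Bc_G})$, which one can also read off directly from the explicit list in Lemma~\ref{GBlemma}. Hence $\{z\}\cup F$ is a face of $\Delta$ for every face $F$, i.e.\ $\Delta=\{z\}\ast\Gamma$ where $\Gamma$ is the induced subcomplex of $\Delta$ on the vertices other than $z$. Geometrically $\Bc_G$ is then the cone with apex $\mathbf 0$ over $\Gamma$, and since $\mathbf 0$ lies in the interior of $\Bc_G$ the underlying space of $\Gamma$ must be the whole boundary $\partial\Bc_G\cong S^{d-1}$. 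Thus $\Gamma$ is a triangulation of a $(d-1)$-sphere, and it is flag because an induced subcomplex of a flag complex is flag.

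To finish I would match the two polynomials. The Stanley--Reisner ring of $\Delta=\{z\}\ast\Gamma$ is $K[\Delta]\cong K[\Gamma]\otimes_K K[z]$, so its Hilbert series is that of $K[\Gamma]$ divided by $1-x$; and since $\Gamma$ triangulates a sphere it is Cohen--Macaulay by Reisner's criterion, so the Hilbert series of $K[\Gamma]$ is $h(\Gamma,x)/(1-x)^{d}$, whence that of $K[\Delta]$ is $h(\Gamma,x)/(1-x)^{d+1}$. On the other hand $\operatorname{in}_{<_R}(I_{\Bc_G})$ is the Stanley--Reisner ideal of $\Delta$, so $K[\Delta]$ and $K[\Bc_G]$ have the same Hilbert series, which equals $\text{Ehr}_{\Bc_G}(x)=h^*(\Bc_G,x)/(1-x)^{d+1}$ because $\Bc_G$, having a unimodular triangulation, is normal. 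Comparing numerators gives $h^*(\Bc_G,x)=h(\Gamma,x)$, the $h$-polynomial of the flag triangulated sphere $\Gamma$, as claimed.

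I expect the only delicate step to be the cone-point observation: for a general unimodular triangulation of a reflexive polytope the induced boundary triangulation need not be flag, so the real content is that the flag regular unimodular triangulation supplied by Lemma~\ref{GBlemma} can be taken to be a cone from the origin over the boundary sphere. Everything else is routine bookkeeping with Ehrhart series, initial ideals and Stanley--Reisner rings together with Reisner's criterion, and the sole external input is that bipartite permutation graphs are chordal bipartite.
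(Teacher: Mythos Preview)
Your proof is correct and follows essentially the same route as the paper: both arguments use that bipartite permutation graphs are chordal bipartite (so Theorem~\ref{kb} applies), take the squarefree quadratic initial ideal with $z$ smallest in reverse lex to get a flag regular unimodular triangulation coned from the origin, and then read off $h^*(\Bc_G,x)$ as the $h$-polynomial of the induced flag triangulation of $\partial\Bc_G\cong S^{d-1}$. You have simply made explicit the Stanley--Reisner/Hilbert-series bookkeeping that the paper leaves implicit in its reference to \cite[Chapter~8]{sturmfels1996}.
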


\begin{proof}
It is known \cite[p.94]{graphclasses} that any bipartite permutation graph satisfies the condition (iv) in 
Theorem~\ref{kb}.
Hence there exists a squarefree quadratic initial ideal with respect to 
a reverse lexicographic order such that the smallest variable
corresponds to the origin.
This means that there exists a flag regular unimodular triangulation $\Delta$
such that the origin is a vertex of any maximal simplex in $\Delta$.
Then $h^*(\Bc_G, x)$ coincides with the $h$-polynomial of a 
flag triangulation of the boundary of 
a convex polytope $\Bc_G$ arising from $\Delta$.
\end{proof}

In \cite{HJMsymmetric}, for a $(p,q)$-complete bipartite graph $K_{p,q}$, a simple description for the $h^*$-polynomial of $\Ac_{K_{p,q}}$ was given. 
In fact, one has
\begin{equation}
\label{formula:A}
h^*(\Ac_{K_{p+1,q+1}}, x)
=
\sum_{i = 0}^{\min\{p,q\}}
\binom{2i}{i}  \binom{p}{i} \binom{q}{i} x^i (x+1)^{p+q+1 - 2 i}.
\end{equation}
Moreover, it was shown that $h^*(\Ac_{K_{p+1,q+1}}, x)$ is  $\gamma$-positive and real-rooted.
Similarly, we can obtain a simple description for the $h^*$-polynomial of $\Bc_{K_{p,q}}$  and show that  $h^*(\Bc_{K_{p,q}}, x)$ is $\gamma$-positive and real-rooted.
\begin{Example}
	\label{ex:complete}
	Let $K_{p,q}$ be a $(p,q)$-complete bipartite graph.
	Then the comparability graph of a poset $P$
	consisting of two disjoint chains
	$1 < 2 < \cdots < p$ and $p+1 < p+2 < \cdots < p+q$
	is the complement of $K_{p,q}$.
	It is easy to see that
	$ $
	$$
	W(P)(x) = \sum_{i = 0}^{\min\{p,q\}}
	\binom{p}{i} \binom{q}{i} x^i.
	$$
	Hence we have
	\begin{equation}
	\label{formula:B}
	h^*(\Bc_{K_{p,q}}, x)
	=
	(x+1)^{p+q} W(P) \left(\frac{4x}{(x+1)^2} \right)
	=
	\sum_{i = 0}^{\min\{p,q\}}
	4^i  \binom{p}{i} \binom{q}{i} x^i (x+1)^{p+q - 2 i}.	
	\end{equation}
	Simion \cite{Simion} proved that $W(P)(x)$ is real-rooted if 
	$P$ is a naturally labeled and disjoint union of chains.
	Thus the $h^*$-polynomial $h^*(\Bc_{K_{p,q}}, x)$ is real-rooted.	
\end{Example}

\begin{Remark}
In a very recent work \cite{locallyanti}, it is shown that the $h^*$-polynomials of 
symmetric edge polytopes of certain graphs can be computed by that of 
$\Bc_G$.
For example, the equation (\ref{formula:A}) can be obtained from the equation (\ref{formula:B})
(\cite[Example. 4.10]{locallyanti}).
\end{Remark}

In \cite{HJMsymmetric}, for the proof of the real-rootedness of $h^*(\Ac_{K_{p,q}},x)$, interlacing polynomials techniques were used.
Let $f$ and $g$ be real-rooted polynomials with roots $a_1\geq a_2 \geq \cdots$, respectively, $b_1 \geq b_2 \geq \cdots$.
Then $g$ is said to {\em interlace} $f$ if
\[
a_1 \geq b_1 \geq a_2 \geq b_2 \geq \cdots.
\] 
In this case, we write $f \preceq g$.
In \cite{HJMsymmetric}, it is shown that 
\[h^*(\Ac_{K_{p,q}},x) \preceq h^*(\Ac_{K_{p,q+1}},x).\]

By a similar way of \cite{HJMsymmetric}, we can prove the following.
\begin{Proposition}
	For all $p, q \geq 1$, one has \[h^*(\Bc_{K_{p,q}},x) \preceq h^*(\Bc_{K_{p,q+1}},x).\]
\end{Proposition}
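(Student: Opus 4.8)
The plan is to imitate the interlacing argument from \cite{HJMsymmetric}, transported through the substitution $x \mapsto 4x/(x+1)^2$. By Example~\ref{ex:complete} we know
\[
h^*(\Bc_{K_{p,q}}, x) = (x+1)^{p+q} W_{p,q}\!\left(\frac{4x}{(x+1)^2}\right),
\qquad
W_{p,q}(y) = \sum_{i=0}^{\min\{p,q\}} \binom{p}{i}\binom{q}{i} y^i,
\]
so it suffices to understand the interlacing behaviour of the polynomials $h^*(\Bc_{K_{p,q}},x)$ directly. First I would recall the standard toolkit for interlacing: if $f \preceq g$ and $f \preceq h$ then $f \preceq (g+h)$ and, more importantly, the ``common interleaver'' criterion — a family $f_1,\dots,f_m$ has a common interleaver if and only if every nonnegative combination $\sum \lambda_i f_i$ is real-rooted — together with the fact that multiplying a real-rooted polynomial having only nonpositive roots by $(x+1)$ preserves real-rootedness and shifts roots in a controlled way. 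The substitution $y = 4x/(x+1)^2$ maps $(-\infty,0]$ onto $[-1,0)\cup(-\infty,-1)$ in a two-to-one fashion branching at $x=-1$, which is exactly the mechanism that turns a real-rooted $W_{p,q}$ into a real-rooted $h^*$; I would make this precise by noting that if $W(y)=\prod(y-y_k)$ with $y_k<0$, then $(x+1)^{\deg W}W(4x/(x+1)^2)=\prod\big((x+1)^2 \cdot\text{const} - 4x\big)\cdot(\text{power of }(x+1))$ and each quadratic factor $(x+1)^2 c - 4x$ with $c>0$ has two negative real roots.

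The key step is then to establish a recursion for $W_{p,q}$ compatible with interlacing. The Pascal-type identity $\binom{q+1}{i}=\binom{q}{i}+\binom{q}{i-1}$ gives
\[
W_{p,q+1}(y) = W_{p,q}(y) + y\sum_{i\ge 1}\binom{p}{i}\binom{q}{i-1}y^{i-1},
\]
and the second sum is (up to reindexing) closely related to $W_{p-1,q}(y)$ or to a derivative-type expression; the cleanest route is probably to use the contiguous relation for these Jacobi-polynomial-like sums, e.g. something of the shape $W_{p,q+1}=W_{p,q}+\frac{pq}{?}\,y\,W_{p-1,q}$-style identity, verified by comparing coefficients of $y^i$ on both sides. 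Once such a recursion is in hand, I would argue by induction on $q$ (with $p$ fixed, then symmetrise): assuming $h^*(\Bc_{K_{p,q-1}},x) \preceq h^*(\Bc_{K_{p,q}},x)$, show that the ``increment'' polynomial inherits real-rootedness and the right interleaving, so that $h^*(\Bc_{K_{p,q}},x) \preceq h^*(\Bc_{K_{p,q}},x) + (\text{increment}) = h^*(\Bc_{K_{p,q+1}},x)$. The transfer of interlacing through $x\mapsto 4x/(x+1)^2$ is monotone on each of the two branches $(-1,0)$ and $(-\infty,-1)$, so interlacing on the $y$-side (which is the content of the argument in \cite{HJMsymmetric} for $W_{p,q}$, or of Simion's real-rootedness result) lifts to interlacing on the $x$-side branch by branch.

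I would organise the write-up as: (1) state the substitution lemma that turns real-rooted-with-nonpositive-roots polynomials and their interlacings on $y\le 0$ into the corresponding statements in $x$ via $y=4x/(x+1)^2$; (2) invoke from \cite{HJMsymmetric}/Simion that $W_{p,q}(y)\preceq W_{p,q+1}(y)$ (or reprove it via the contiguous recursion and the common-interleaver criterion); (3) conclude $h^*(\Bc_{K_{p,q}},x)\preceq h^*(\Bc_{K_{p,q+1}},x)$ by (1). The main obstacle I anticipate is step (1): one must check that the branch structure of $y=4x/(x+1)^2$ does not destroy the interleaving pattern — concretely, that the roots of $h^*(\Bc_{K_{p,q}},x)$ split into those in $(-1,0)$ and those in $(-\infty,-1)$ in a way that pairs up correctly with the roots of $h^*(\Bc_{K_{p,q+1}},x)$, so that $a_1\ge b_1\ge a_2\ge\cdots$ holds on the nose rather than just ``up to interleaving within each branch''. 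This should follow from the fact that $(x+1)^{p+q}W_{p,q}(4x/(x+1)^2)$ is palindromic (its roots come in pairs $x,1/x$, hence one root in each branch per root of $W_{p,q}$), which pins down the interlacing pattern; but verifying the inequalities at the boundary point $x=-1$ and at $0$ will require a little care, and is where I would spend the most effort.
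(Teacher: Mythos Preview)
Your overall architecture matches the paper's: prove interlacing at the level of the $\gamma$-polynomials $W_{p,q}$ (equivalently $\gamma(\Bc_{K_{p,q}},x)=\sum_{i\ge 0}4^i\binom{p}{i}\binom{q}{i}x^i$), then lift through the substitution $y=4x/(x+1)^2$. Where you diverge is in the tools used at each step, and in both places the paper takes a shorter path.

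For the $\gamma$-level interlacing, the paper does not use a contiguous recursion or induction. Instead it observes that $\gamma(\Bc_{K_{p,q}},x)$ is obtained from $(4x+1)^q$ by applying the multiplier sequence $\{\binom{p}{i}\}_{i\ge 0}$; since $(4x+1)^q\preceq(4x+1)^{q+1}$ trivially and multiplier sequences preserve interlacing, $\gamma(\Bc_{K_{p,q}},x)\preceq\gamma(\Bc_{K_{p,q+1}},x)$ follows in one line. Your proposed recursion-and-induction route would work, but you have not pinned down the exact identity (the ``something of the shape'' recursion), and once you do, you still have to verify the interlacing step; the multiplier-sequence argument bypasses all of that.

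For the lifting step, what you call the ``substitution lemma'' is precisely \cite[Lemma~4.10]{HJMsymmetric}, which the paper simply cites. Your worry about the branch structure of $y=4x/(x+1)^2$ is legitimate, and it is exactly what that lemma resolves: palindromicity forces the roots of $h^*$ to come in reciprocal pairs $x,1/x$, one in $(-1,0)$ and one in $(-\infty,-1)$ per root of the $\gamma$-polynomial, and interlacing on the $y$-side then transfers to both branches simultaneously. So rather than reproving this from scratch, you can (and the paper does) invoke the existing lemma. Note also that you cannot literally cite $W_{p,q}\preceq W_{p,q+1}$ from \cite{HJMsymmetric}, since the $\gamma$-polynomials there carry an extra $\binom{2i}{i}$ factor; only the \emph{technique} transfers, not the statement.
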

\begin{proof}
	Set $\gamma(\Bc_{K_{p,q}},x)=\sum_{i \geq 0}
	4^i  \binom{p}{i} \binom{q}{i} x^i$.
	Since $\{\binom{p}{i}\}_{i \geq 0}$ is a multiplier sequence (see \cite{HJMsymmetric}) and since $(4x+1)^{q} \preceq (4x+1)^{q+1}$, one has 
	$\gamma(\Bc_{K_{p,q}},x) \preceq \gamma(\Bc_{K_{p,q+1}},x).$
	By \cite[Lemma 4.10]{HJMsymmetric}, we obtain
	$h^*(\Bc_{K_{p,q}},x) \preceq h^*(\Bc_{K_{p,q+1}},x).$
\end{proof}

\end{document}